\documentclass[a4paper,11pt,english]{article}
\usepackage{babel}
\usepackage[latin1]{inputenc}

\usepackage{color}

\usepackage{authblk}

\author[1]{Thazin Aye}
\author[2]{Jian Fang\thanks{jfang@hit.edu.cn}}
\author[3]{Yingli Pan}

\affil[1]{Department of Mathematics, Harbin Institute of Technology, China}
\affil[2]{Institute	for Advanced Study in Mathematics and Department of Mathematics, Harbin Institute of Technology, China}
\affil[3]{Institute for Mathematical Sciences, Renmin University of China, China}

\title{\textbf{On a population model in discrete periodic habitat. I. spreading speed and optimal dispersal strategy\thanks{This work received fundings from the NSF of China (11771108) and the NSF of Heilongjiang Province of China (LC2017002). J. Fang would like to thank the Fields Institute for hospitality and support of the Fields Research Fellowship in August 2019, during which part of this work was done. }}}
\date{\today}
\usepackage{amsmath,amsfonts,amssymb}
\usepackage{yhmath,mathrsfs,yfonts,arcs}
\usepackage{amsthm}
\usepackage{graphicx}
\usepackage{enumerate}

\usepackage{lineno}

\usepackage{hyperref}
\hypersetup{colorlinks=true,       
    linkcolor=red,          
    citecolor=green,        
    filecolor=magenta,      
    urlcolor=cyan           
    }

\newtheorem{theorem}{Theorem}[section]
\newtheorem{remark}[theorem]{Remark}

\newtheorem{lemma}[theorem]{Lemma}


 \unitlength1in \setlength{\textheight}{8in}
 \setlength{\textwidth}{5in} \oddsidemargin0.1cm

\newcommand{\field}[1] {\mathbb{#1}}

\newcommand{\Z}{\field{Z}}

\newcommand{\R}{\field{R}}

\newcommand{\f}{\frac}

\def\a{\alpha}
\def\b{\beta}
\def\e{\varepsilon}

\newcommand{\mc}{\mathcal}

\usepackage[textwidth=16.0cm,textheight=24cm]{geometry}

\begin{document}
\maketitle



\begin{abstract}
Starting from an age-structured diffusive population growth law for single species in a discrete and periodic habitat, we formulate a stage structured population model with spatially periodic dispersal, mortality and recruitment.  With a KPP type setting, after establishing the fundamental solution of a discretized heat equation with spatially periodic dispersal, we apply some recently developed dynamical system theories to obtain the existence of the spreading speed and its coincidence with the minimal speed of pulsating waves, as well as the variational characterization of the speed, by which we further analyze how the habitat periodicity influences the speed. In particular, there is a unique optimal dispersal strategy to maximize the speed. 
\end{abstract}

\smallskip
\noindent \textbf{Keywords:} periodic habitat, spreading speed, optimal dispersal strategy.

\smallskip
\noindent \textbf{2010 MSC: 35K57, 92D25} 

\setcounter{equation}{0}
\section{Introduction}

How does the spatial heterogeneity influence the species invasion is a challenging question. Many works have been devoted to the study of the dramatic influence of spatial heterogeneity on the complex invasive dynamics. Among the various kinds of heterogeneities, a typical one is the periodically fragmental habitat \cite{HR-TJM14}, such as corn and paddy fields. Recent studies also reveal that river bottom may provide a periodical habitat for the benthos due to the water drifts \cite{LLM-BMB06}. Traveling waves and spreading speed are two useful mathematical objectives for the study of invasion phenomena \cite{Fisher, KPP, AW-AM78}. In heterogeneous habitat, new mathematical objective arise, such as pulsating waves, generalized transition waves and global mean speed, see \cite{SKT-TPB86, Xin-SIAM00, Weinberger-JMB02, BH-CPAM02, BH-CPAM12} and references therein.

In this paper, we are interested in the scenario that how a periodic discrete habitat influences the invasion speed of a stage-structured species. For this purpose, we ideally assume that the one-dimensional discrete habitat $\Z$ can be classified into two classes: {\it even locations are referred to good and odd locations bad.} With such an assumption, we will formulate the following model
\begin{equation}\label{model}
\left\{
\begin{array}{ll}
u_i'(t)=\a [u_{i-1}(t)+u_{i+1}(t)]-2\b u_i(t)  -\gamma u_t(t)+f(u_i(t-\tau)),&  \text{$i$ is even},\\
u_i'(t)=\b [u_{i-1}(t)+ u_{i+1}(t)]-2\a u_i(t)  -\eta u_i(t),& \text{$i$ is odd},
\end{array}
\right.
\end{equation}
where $u_i(t)$ represents the density of the matured population at time $t$ and location $i$, $\a$ is the dispersal rate from bad locations to their adjacent good locations,  $\b$ is the dispersal rate from good locations to their adjacent bad locations, $\gamma$ and $\eta$ are the mortality rates, $f$ is the recruitment for mature population in good locations, and $\tau$ is the maturation age. The derivation details of \eqref{model} will be given in the next section. 

Models in discrete habitat are known as "patchy models", which had been widely established for the study of disease spread and species invasion, for instance we refer to \cite{Lou-07, Shen-98, FZ-JEMS15, Gao-19, LP-19, WWR-15}. Below we review some studies on patchy models of the Fisher-KPP type, by which our study is highly motivated. In 1993, Zinner, Harris and Hudson \cite{ZHH-JDE93} studied the invasion dynamics of the following model
\begin{equation}\label{Zinner}
u_i'=d(u_{i-1}-2u_i+u_{i+1})+u_i(1-u_i),
\end{equation}
which is a discretized version of the Fisher-KPP equation. Weng, Huang and Wu \cite{WHW-IMA03} introduced global interactions (induced by time delay) into \eqref{Zinner} by modeling the invasion of a stage-structured species:
\begin{equation}\label{WHW}
u_i'(t)=d(u_{i-1}(t)-2u_i(t)+u_{i+1}(t))-u_i(t)+\sum_{k\in\Z}\beta_kf(u_{i-k}(t-\tau)),
\end{equation}
where $\beta_k\ge 0$ and $\sum_{k\in\Z}\beta_k=1$. Guo and Hamel \cite{GH-MA06} investigated the spatially periodic Fisher-KPP equation in discrete habitat: 
\begin{equation}\label{GH}
u_i'=d_{i+1}(u_{i+1}-u_i)+d_i(u_{i-1}-u_i)+f_i(u_i),
\end{equation}
where $d_{j}=d_{i-N}$ for some $N>0$ and all $i\in\Z$. Wu and Hsu \cite{WH-DCDS18} combined the spatially periodic heterogeneity in \eqref{GH} and the global interactions in \eqref{WHW} to obtain a general model
\begin{equation}\label{WH}
u_i'(t)=d_{i+1}(u_{i+1}(t)-u_i(t))+d_i(u_{i-1}(t)-u_i(t))+f_i(u_i(t), \sum_{k\in\Z}\beta_k (u_{i-k}(t-\tau))).
\end{equation}
For \eqref{WHW}-\eqref{WH}, with suitable assumptions the Fisher-KPP structure can be verified, and the authors have proved the existence of the spreading speed and its coincidence of the minimal wave speed, as well as the variational characterization of the spreading speed\cite{WHW-IMA03, GH-MA06, WH-DCDS18}. Moreover, for \eqref{GH} the authors \cite{GH-MA06} also showed the convergence of the spreading speed in discrete habitat to that in continuous habitat in a certain sense. For \eqref{WH},  the authors \cite{WH-DCDS18} also showed that the periodicity in the recruitment term can increase the speed in a certain sense when diffusion is homogeneous (i.e. $d_i\equiv d, i\in\Z$) and $\tau=0$. The exponential stability of the pulsating waves is also obtained in \cite{WH-DCDS18}. But it is still unclear how the spatially periodic diffusion influences the speed.

In \eqref{model}, the species' dispersal is assumed to have a preference to good locations. And the resulting diffusion strategy  in \eqref{model} is different from \eqref{GH} and \eqref{WH}. With such a directional diffusion, we intend to study how it influences the invasion speed under a KPP type setting. In particular, we note that in bad locations, there is no birth. As such, large dispersal to bad locations (i.e., $\beta>\beta_0$ for some positive $\beta_0$) may lead to the species distinction, while no diffusion to bad locations (i.e., $\beta=0$) leads to disconnection of locations, which immediately implies invasion failure.  {\it This intuition then gives rise to the question that how to choose an appropriate dispersal strategy to maximize the invasion speed, which is increasing in both the population density $u_i$ and diffusion coefficient $\beta$, while $u_i$ is decreasing in $\beta$.}

Define
\begin{equation}
\Gamma:=\gamma+\f{2\beta\eta}{2\alpha+\eta}.
\end{equation}
Assume that $f\in C^1$ and there exists $w^*>0$ such that 
\begin{equation}\label{f1}
f(w)-\Gamma w
\begin{cases}
=0, & w=0\quad \text{or}\quad w^*,\\
>0, & w\in(0,w^*)
\end{cases}
\end{equation}
and
\begin{equation}\label{f2}
f'(w)\ge 0,\quad \left(f(w)/w\right)'\le 0, \quad w\in (0,w^*].
\end{equation}
From condition \eqref{f1} we see that $0$ is a steady state of \eqref{model}. From condition \eqref{f2} we see that $f$ is nondecreasing and sublinear, which, combining with condition \eqref{f1}, further implies that state $0$ is linearly unstable and there is a unique positive steady state
\begin{equation}
\text{$U^*:=\{u_i^*\}_{i\in \Z}$ with $u_i^*=u_{i+2}^*$ for $i\in\Z$.}
\end{equation}
A typical example satisfying \eqref{f1} and \eqref{f2} is $f(w)=\f{pw}{q+w}, p>\Gamma, q>0$. 

Recall that a positive number $c^*$ is the spreading speed of \eqref{model} provided that for any initial value $\phi\in C([-\tau,0]\times \Z, \R)$ with $0\le \phi(\theta,i)\le u_i^*$
and 
$\phi(\theta, i)\equiv 0$ when $(\theta, i) \in[-\tau,0]\times [-L,L]$ for some $L>0$,  the following limits hold: 
\begin{equation}
\lim_{t\to\infty}\max_{|i|\ge (c^*+\epsilon)t}|u(t,i)|=0=\lim_{t\to\infty}\max_{|i|\le (c^*-\epsilon)t}|u(t,i)-u_i^*|,\quad \forall \epsilon\in (0,c^*).
\end{equation}
A function $W(i,\xi), i\in\Z, \xi\in\R$ is a pulsating wave of \eqref{model} with average speed $c\in\R$ provided that  $W(i,i-ct)$ is a solution of \eqref{model} and
\begin{equation}
W(i,-\infty)=u_i^*,\quad W(i,+\infty)=0,\quad W(i,\xi)=W(i+2,\xi),\quad i\in\Z, \xi\in\R.
\end{equation}

\begin{theorem}\label{sstw}
Assume that \eqref{f1} and \eqref{f2} hold. Then \eqref{model} admits the spreading speed $c^*$, which coincides with the minimal speed of pulsating waves. Moreover, 
\begin{equation}
c^*=\min_{\mu>0}\f{\lambda(\mu)}{\mu},
\end{equation}
where $\lambda(\mu)$ is the unique positive solution of $-(\lambda+2\beta+\gamma)+\f{\alpha\beta(e^{\mu}+e^{-\mu})^2}{\lambda+2\alpha+\eta}+f'(0)e^{-\lambda\tau}=0$. 
\end{theorem}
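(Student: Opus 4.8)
The plan is to recast \eqref{model} as a monotone, spatially periodic semiflow and then invoke the abstract theory of spreading speeds and pulsating waves for such semiflows. Since the equation carries a maturation delay and lives on the unbounded lattice $\Z$, I would take as phase space the histories $X=C([-\tau,0],\mathcal{X})$, where $\mathcal{X}$ is a space of bounded sequences ordered componentwise, restrict to the order interval determined by $0$ and $U^*$, and let $Q_t$ be the associated solution semiflow. The nonnegativity of $f'$ in \eqref{f2} yields a comparison principle, so $Q_t$ is order preserving, while the period-$2$ structure of the coefficients makes $Q_t$ commute with the two-site spatial shift $T_2$. The two ordered equilibria $0$ and $U^*$ supplied by \eqref{f1}--\eqref{f2} then provide the setting on which the theory operates, delivering at once the existence of $c^*$ and its equality with the minimal speed of pulsating waves.

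The substantive work is to verify the structural hypotheses of that theory. Monotonicity and periodicity are immediate; the subhomogeneity needed for linear determinacy follows from $(f(w)/w)'\le 0$, and the instability of $0$ together with the global attractivity of $U^*$ follows from \eqref{f1}--\eqref{f2}. The delicate hypothesis is the compactness/continuity requirement, since neither the delay nor the lattice supplies compactness. Here I would use the fundamental solution of the discretized periodic heat equation established earlier to write a variation-of-constants representation of $Q_t$, from which the required asymptotic smoothness and order-preserving pointwise convergence can be read off.

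It remains to produce the variational formula by linearizing at $0$. Inserting the ansatz $u_i(t)=e^{\lambda t-\mu i}v_i$ with $v_i=v_{i+2}$ into the linearized system, and writing $a$ and $b$ for the value of $v_i$ at even and odd sites, gives the coupled pair $[\lambda+2\beta+\gamma-f'(0)e^{-\lambda\tau}]a=\alpha(e^{\mu}+e^{-\mu})b$ and $[\lambda+2\alpha+\eta]b=\beta(e^{\mu}+e^{-\mu})a$. Eliminating $b$ yields exactly
\[
g(\lambda):=-(\lambda+2\beta+\gamma)+\f{\alpha\beta(e^{\mu}+e^{-\mu})^2}{\lambda+2\alpha+\eta}+f'(0)e^{-\lambda\tau}=0.
\]
For fixed $\mu>0$ the function $g$ is strictly decreasing on $[0,\infty)$ with $g(+\infty)=-\infty$, and
\[
g(0)=f'(0)-\Gamma+\f{\alpha\beta\left[(e^{\mu}+e^{-\mu})^2-4\right]}{2\alpha+\eta}>0,
\]
since $f'(0)>\Gamma$ by \eqref{f1}--\eqref{f2} and $(e^{\mu}+e^{-\mu})^2>4$; hence a unique positive root $\lambda(\mu)$ exists. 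The abstract theory identifies $c^*=\inf_{\mu>0}\lambda(\mu)/\mu$. Because $\lambda(0^+)>0$ forces $\lambda(\mu)/\mu\to+\infty$ as $\mu\to0^+$, while the growth $\lambda(\mu)\sim\sqrt{\alpha\beta}\,e^{\mu}$ forces $\lambda(\mu)/\mu\to+\infty$ as $\mu\to+\infty$, the continuous positive quotient attains its infimum, giving the stated minimum.

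The step I expect to be the main obstacle is the verification of the compactness/continuity condition, precisely because the combination of time delay and unbounded discrete habitat removes every obvious source of compactness; the fundamental-solution representation is what makes this hypothesis checkable. A secondary subtlety is confirming linear determinacy, namely that the genuine nonlinear spreading speed equals the linearized quantity $\min_{\mu>0}\lambda(\mu)/\mu$, which relies on the sublinearity in \eqref{f2} to furnish matching super- and subsolutions pinning the front.
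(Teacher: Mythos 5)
Your proposal is correct and follows essentially the same route as the paper: both cast \eqref{model} as a monotone $2$-periodic semiflow on the order interval $[0,U^*]$, invoke the Liang--Zhao abstract spreading-speed/pulsating-wave theory with the fundamental solution of Section 3 supplying the needed compactness (the paper verifies this via a Kuratowski measure-of-noncompactness contraction), and obtain the variational formula from the same period-$2$ eigenvalue reduction yielding $-(\lambda+2\beta+\gamma)+\f{\alpha\beta(e^{\mu}+e^{-\mu})^2}{\lambda+2\alpha+\eta}+f'(0)e^{-\lambda\tau}=0$. Your closing argument that the infimum of $\lambda(\mu)/\mu$ is attained (using $\lambda(0^+)>0$ and the growth $\lambda(\mu)\sim\sqrt{\alpha\beta}\,e^{\mu}$) matches the paper's final step as well.
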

By Theorem \ref{sstw}, we see that with the KPP structure if the population growth is big enough ($f'(0)>\Gamma$), then the species can spread out and the spreading speed coincides with the minimal speed of the pulsating waves. 

By conditions \eqref{f1} and \eqref{f2}, we infer that  $f'(0)=\lim_{w\to0}\f{f(w)}{w}\ge  \f{f(\f{w^*}{2})}{\f{w^*}{2}}>\Gamma.$
In view of the definition of $\Gamma$, we see that $f'(0)>\Gamma$ is equivalent to $\beta<\beta_0$, where $\beta_0$ is defined by
\begin{equation}\label{def-beta0}
\beta_0:=\f{(f'(0)-\gamma)(2\alpha+\eta)}{2\eta}.
\end{equation}

\begin{theorem}\label{ss1}
There exists $\beta_1\in(0,\beta_0)$ such that $c^*=c^*(\beta)$ increases in $\beta\in(0,\beta_1)$ and decreases in $\beta\in(\beta_1,\beta_0)$. Further, 
\begin{equation}\label{maximum}
\max_{\beta\in(0,\beta_0)} c^*(\beta)=c^*(\beta_1)=\f{\lambda^*}{\cosh^{-1}\left(\sqrt{\f{\lambda^*+2\alpha+\eta}{2\alpha}}\right)},
\end{equation}
where $\lambda^*$ is the unique positive solution of the transcendental equation $-(\lambda+\gamma)+f'(0)e^{-\lambda\tau}=0$.  
\end{theorem}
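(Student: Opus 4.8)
The plan is to treat $c^*(\beta)=\min_{\mu>0}\lambda(\mu,\beta)/\mu$ as a smooth minimization problem and to extract its dependence on $\beta$ from the envelope theorem, after rewriting the defining relation via $e^{\mu}+e^{-\mu}=2\cosh\mu$. Set $H(\lambda,\mu,\beta):=-(\lambda+2\beta+\gamma)+\frac{4\alpha\beta\cosh^2\mu}{\lambda+2\alpha+\eta}+f'(0)e^{-\lambda\tau}$, so that $\lambda(\mu,\beta)$ is the root of $H=0$. Since $\partial_\lambda H<0$ and $\partial_\mu H=\frac{4\alpha\beta\sinh 2\mu}{\lambda+2\alpha+\eta}>0$, this root is unique, smooth in $(\mu,\beta)$ by the implicit function theorem, and strictly increasing in $\mu$. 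First I would record the coercivity of the inner problem: for $\beta\in(0,\beta_0)$ the root tends to a finite positive value as $\mu\to0^+$ and satisfies $\lambda(\mu,\beta)\sim\sqrt{\alpha\beta}\,e^{\mu}$ as $\mu\to\infty$, so $c(\mu,\beta):=\lambda(\mu,\beta)/\mu\to+\infty$ at both ends and the minimum is attained at an interior $\mu^*(\beta)$, where the first-order condition $\lambda_\mu(\mu^*,\beta)=\lambda(\mu^*,\beta)/\mu^*=c^*(\beta)$ holds.

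Next I would pin down the two endpoints of $(0,\beta_0)$. Since $H(\lambda,\mu,0)=-(\lambda+\gamma)+f'(0)e^{-\lambda\tau}$ is $\mu$-independent, $\lambda(\mu,0)\equiv\lambda^*$; and a direct evaluation gives $H(0,0,\beta)=f'(0)-\Gamma$, which vanishes exactly at $\beta=\beta_0$, forcing $\lambda(0^+,\beta_0)=0$. Using the elementary upper bound $c^*(\beta)\le\lambda(\mu_0,\beta)/\mu_0$ for each fixed $\mu_0$ and then optimizing in $\mu_0$, I would deduce $c^*(0^+)=0$ (let $\beta\to0$, then $\mu_0\to\infty$) and $c^*(\beta_0^-)=0$ (let $\beta\to\beta_0$, then $\mu_0\to0$). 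Together with $c^*>0$ on $(0,\beta_0)$ and continuity, this guarantees that $c^*$ attains an interior maximum.

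The heart of the argument is the sign of $c^{*\prime}$. By the envelope and implicit function theorems, $c^{*\prime}(\beta)=\frac1{\mu^*}\lambda_\beta(\mu^*,\beta)$ has the same sign as $\partial_\beta H=-2+\frac{4\alpha\cosh^2\mu^*}{\lambda+2\alpha+\eta}=:g(\mu^*,\beta)$. Substituting the characteristic equation $H=0$ yields the key identity
\[
-(\lambda+\gamma)+f'(0)e^{-\lambda\tau}=-\beta\,g(\mu^*,\beta),
\]
whose left-hand side $\psi(\lambda)$ is strictly decreasing with unique zero $\lambda^*$. Hence $g=0\iff\lambda(\mu^*,\beta)=\lambda^*$, and in that case $g=0$ forces $\cosh^2\mu^*=\frac{\lambda^*+2\alpha+\eta}{2\alpha}$, i.e. $\mu^*=\mu^*_c:=\cosh^{-1}\sqrt{(\lambda^*+2\alpha+\eta)/(2\alpha)}$. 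Thus \emph{every} critical point of $c^*$ forces the same pair $(\lambda,\mu^*)=(\lambda^*,\mu^*_c)$.

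Finally I would prove uniqueness of the critical point and conclude. At such a point the inner first-order condition $\lambda_\mu(\mu^*_c,\beta)=\lambda^*/\mu^*_c$ must also hold; evaluating $\lambda_\mu=-\partial_\mu H/\partial_\lambda H$ at $(\lambda^*,\mu^*_c)$ produces a single equation that is affine in $\beta$ with strictly positive constant term, so its $\beta$-coefficient cannot vanish and there is exactly one solution $\beta_1$, which by the endpoint analysis lies in $(0,\beta_0)$. Consequently $c^*$ has a unique critical point; combined with the vanishing endpoint values and positivity inside, $c^*$ increases on $(0,\beta_1)$ and decreases on $(\beta_1,\beta_0)$, with maximum $c^*(\beta_1)=\lambda^*/\mu^*_c$, which is precisely \eqref{maximum}. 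I expect the main obstacle to be the regularity input making the envelope computation rigorous, namely that for each $\beta$ the inner minimizer $\mu^*(\beta)$ is unique and nondegenerate (equivalently that $\lambda(\cdot,\beta)$ is convex so $\lambda(\mu)/\mu$ is unimodal), together with the careful treatment of the degenerate endpoints $\beta\to0^+$ and $\beta\to\beta_0^-$; the algebraic collapse $g=0\Rightarrow\lambda=\lambda^*$ is the decisive simplification that makes the critical point unique.
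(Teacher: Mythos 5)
Your proposed route is, in structure, the paper's own proof. The correspondences are exact: your endpoint analysis is a valid variant of the paper's Lemmas \ref{limit1} and \ref{limit2} (for $\beta\to\beta_0^-$ your iterated limit needs $\lambda(\mu,\beta_0)=O(\mu^2)$ as $\mu\to 0$, not merely $\lambda(0^+,\beta_0)=0$, but this follows from evenness of $\lambda(\cdot,\beta_0)$ in $\mu$); your observation that $c^{*\prime}(\beta)$ has the sign of $\partial_\beta H$ at the minimizer is the paper's \eqref{eq305}--\eqref{eq306}; your key identity $-(\lambda+\gamma)+f'(0)e^{-\lambda\tau}=-\beta g(\mu^*,\beta)$ is exactly the paper's combination of \eqref{eq309} and \eqref{eq307}, yielding \eqref{eq308} and \eqref{eq311}; and your uniqueness argument (the inner stationarity condition at $(\lambda^*,\mu^*_c)$ is affine in $\beta$ with positive constant term) is the same computation as the paper's reduction to $h^{-1}(C(\lambda^*,\beta))=\cosh^{-1}\bigl(\sqrt{(\lambda^*+2\alpha+\eta)/(2\alpha)}\bigr)$ with $C(\lambda^*,\cdot)$ strictly decreasing; both exploit that once $(\lambda,\mu)$ is pinned to $(\lambda^*,\mu^*_c)$, the remaining equation determines $\beta$ uniquely.

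The genuine gap is the step you flag as "the main obstacle" and then never execute: the uniqueness and nondegeneracy of the inner minimizer $\mu^*(\beta)$, equivalently the differentiability of $c^*(\beta)$ and the validity of the envelope formula $c^{*\prime}(\beta)=\frac{1}{\mu^*}\lambda_\beta(\mu^*,\beta)$. Everything downstream hangs on this: without it, the interior maximum need not be a critical point in your sense, $c^{*\prime}$ need not exist, and the conclusion "unique critical point $\Rightarrow$ monotone on each side" is unjustified. This is not a routine regularity remark but a concrete computation, and it is precisely what the paper's Jacobian argument supplies: at any solution of the system \eqref{eq401} one writes $\frac{d^2}{d\mu^2}F$ as $\frac{1}{(\partial_1 F)^2}\bigl[(\partial_2 F)^2\partial_1^2 F-2\partial_2 F\,\partial_{12}^2 F\,\partial_1 F+\partial_{22}^2 F(\partial_1 F)^2\bigr]$ and checks, via the lower bound $\partial_1^2F\ge \frac{2\alpha\beta(e^\mu+e^{-\mu})^2}{(\lambda+2\alpha+\eta)^3}$ and $\partial_{22}^2F>0$, that the discriminant of this quadratic form in $\partial_1 F$ is negative, hence $\frac{d^2}{d\mu^2}F>0$. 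This shows every critical point of $\mu\mapsto \lambda(\mu,\beta)/\mu$ is a nondegenerate strict local minimum, whence the minimizer is unique, depends smoothly on $\beta$ by the implicit function theorem, and $c^*$ is differentiable with the envelope formula. You must either reproduce this discriminant computation or prove your suggested alternative (convexity of $\lambda(\cdot,\beta)$, unimodality of $\lambda(\mu)/\mu$); as written, the proposal is incomplete at its load-bearing step.
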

By Theorem \ref{ss1}, we see that bad locations are not good for the population growth, but to spread out the species has to disperse to bad locations, scarificing a part of the population. To maximize the invasion speed, the species has an optimal dispersal strategy in terms of $\beta=\beta_1$. Further, for the classical KPP equation $u_t=u_{xx}+f(u)$,  it is well-known that the spreading speed is $2\sqrt{f'(0)}$, which has the same order as $\sqrt{f'(0)}$ when $f'(0)\to+\infty$. But  for the case we studied in spatially periodic habitat, if the species always choose the optimal dispersal strategy $\beta=\beta_1$  then the speed has a higher order than $\sqrt{f'(0)}$ when $f'(0)\to\infty$. Indeed, $\beta_1$ depends on $f'(0)$ and $\lim_{f'(0)\to\infty}\beta_1=\infty$. By \eqref{maximum} we can infer that
\begin{equation}\label{eq901}
\lim_{f'(0)\to\infty} \f{\ln \lambda^*}{\lambda^*}c^*(\beta_1)=2,
\end{equation}
where $\lambda^*$ increases in $f'(0)$ with $\lim_{f'(0)\to\infty}\lambda^*=\infty$. Meanwhile, assuming $\tau=0$ for the sake of simplicity, we have $\lambda^*=f'(0)-\gamma$, and hence, 
\begin{equation}\label{eq902}
\liminf_{f'(0)\to\infty} \f{\ln \lambda^*}{\lambda^*} \sqrt{f'(0)}= \liminf_{x\to\infty} \f{\ln x}{\sqrt{x}}=0. 
\end{equation}
Therefore, by \eqref{eq901} and \eqref{eq902} we see that if the initial growth rate is sufficiently large (i.e., $f'(0)\gg 1$) and $\tau$ is small, then the species may sacrifice a large number of population to reach a spreading speed having higher order than $\sqrt{f'(0)}$, by taking the advantage of habitat periodicity. This is not seen in heterogeneous habitat.  For the case $\tau>0$, one may obtain a similar conclusion.

\begin{remark}
$\beta_1$ is the unique zero of an implicit function, see \eqref{beta-eq}, which provides a way to numerically calculate the optimal dispersal rate. 
\end{remark}

Note that $\Gamma=\Gamma(\eta)$ decreases in $\eta$ and $\lim_{\eta\to\infty}\Gamma(\eta)=2\beta+\gamma$. Define
\begin{equation}
\eta_0:=
\begin{cases}
 \f{2\alpha(f'(0)-\gamma)}{2\beta+\gamma-f'(0)}, & \text{if $f'(0)\in (\Gamma, 2\beta+\gamma)$},\\
+\infty, & \text{if $f'(0)\ge 2\beta+\gamma$}.
\end{cases}
\end{equation}
\begin{theorem}\label{ss2}
$c^*=c^*(\eta)$ decreases in $\eta$ to zero as $\eta$ increases to $\eta_0$.
\end{theorem}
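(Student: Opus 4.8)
The plan is to work entirely with the variational formula $c^*=\min_{\mu>0}\lambda(\mu)/\mu$ from Theorem \ref{sstw}, making the $\eta$-dependence explicit. Writing $(e^{\mu}+e^{-\mu})^2=4\cosh^2\mu$, let $\lambda=\lambda(\mu,\eta)$ denote the unique positive root of
\begin{equation*}
G(\lambda,\mu,\eta):=-(\lambda+2\beta+\gamma)+\f{4\alpha\beta\cosh^2\mu}{\lambda+2\alpha+\eta}+f'(0)e^{-\lambda\tau}=0.
\end{equation*}
First I would establish strict monotonicity of the speed. Since $\partial_\lambda G=-1-4\alpha\beta\cosh^2\mu/(\lambda+2\alpha+\eta)^2-f'(0)\tau e^{-\lambda\tau}<0$ and $\partial_\eta G=-4\alpha\beta\cosh^2\mu/(\lambda+2\alpha+\eta)^2<0$, the implicit function theorem gives $\partial_\eta\lambda=-\partial_\eta G/\partial_\lambda G<0$, so for each fixed $\mu>0$ the ratio $\lambda(\mu,\eta)/\mu$ is strictly decreasing in $\eta$. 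As the pointwise minimum of a family of decreasing functions, $c^*(\eta)$ is nonincreasing; strictness follows by evaluating at the minimizer $\mu_{\eta_1}$ for the smaller value $\eta_1<\eta_2$, via $c^*(\eta_2)\le\lambda(\mu_{\eta_1},\eta_2)/\mu_{\eta_1}<\lambda(\mu_{\eta_1},\eta_1)/\mu_{\eta_1}=c^*(\eta_1)$.

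Next I would pin down the threshold. A direct computation gives the key identity $G(0,0,\eta)=f'(0)-\Gamma(\eta)$, so $\eta_0$ is precisely the value at which the ``zero mode'' $(\lambda,\mu)=(0,0)$ becomes a root; for $\eta<\eta_0$ one has $f'(0)>\Gamma(\eta)$, ensuring $c^*(\eta)>0$. Because $c^*(\eta)$ is decreasing and bounded below by $0$, the limit $\lim_{\eta\to\eta_0}c^*(\eta)$ exists, and it remains only to show it equals $0$. I would split into the two cases in the definition of $\eta_0$.

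In the finite case $f'(0)\in(\Gamma,2\beta+\gamma)$ one has $G(0,0,\eta_0)=0$, and since $G$ is strictly decreasing in $\lambda$ with $G(0,\mu,\eta_0)>0$ for $\mu>0$, the root $\lambda(\mu,\eta_0)>0$ is well defined. Expanding $G(\lambda,\mu,\eta_0)=0$ about $(0,0)$ with $\cosh^2\mu=1+\mu^2+O(\mu^4)$ yields $\lambda(\mu,\eta_0)\sim(B/A)\,\mu^2$ as $\mu\to0^+$, where $A=-\partial_\lambda G|_{(0,0)}>0$ and $B=4\alpha\beta/(2\alpha+\eta_0)>0$; hence $\inf_{\mu>0}\lambda(\mu,\eta_0)/\mu=0$. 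Combining the continuity of $\eta\mapsto\lambda(\mu_*,\eta)$ (again via the implicit function theorem) with the test-point bound $c^*(\eta)\le\lambda(\mu_*,\eta)/\mu_*$ gives $\limsup_{\eta\to\eta_0}c^*(\eta)\le\lambda(\mu_*,\eta_0)/\mu_*$ for every $\mu_*>0$, and taking the infimum over $\mu_*$ forces the limit to be $0$. In the infinite case $f'(0)\ge2\beta+\gamma$ I let $\eta\to\infty$: the dispersal term $4\alpha\beta\cosh^2\mu/(\lambda+2\alpha+\eta)$ vanishes, so $\lambda(\mu,\eta)$ decreases to the $\mu$-independent root $\lambda_\infty$ of $-(\lambda+2\beta+\gamma)+f'(0)e^{-\lambda\tau}=0$; then $c^*(\eta)\le\lambda(\mu_*,\eta)/\mu_*\to\lambda_\infty/\mu_*$, and letting $\mu_*\to\infty$ again yields $0$.

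The main obstacle I anticipate is the limit computation rather than the monotonicity: one must justify interchanging the $\eta$-limit with the infimum over $\mu$, which is exactly why I route the argument through a fixed test point $\mu_*$ and a $\limsup$ bound instead of trying to track the minimizer $\mu_\eta$, whose location degenerates ($\mu_\eta\to0$ in the finite case and $\mu_\eta\to\infty$ in the infinite case). A secondary point requiring care is the leading-order expansion $\lambda(\mu,\eta_0)\sim(B/A)\mu^2$, which rests on $\partial_\lambda G(0,0,\eta_0)\neq0$ so that the implicit function theorem applies and the higher-order terms are genuinely negligible.
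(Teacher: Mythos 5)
Your proof is correct, and its skeleton matches the paper's: establish that $\lambda(\mu,\eta)$ decreases in $\eta$ for fixed $\mu$ (the paper reads this off from $F$ being decreasing in both $\lambda$ and $\eta$; your implicit-function-theorem computation is the same content, with the added care of proving strictness of the decrease of $c^*$ via the attained minimizer), then bound $c^*$ from above using test values of $\mu$, split into the same two cases. Where you genuinely diverge is the finite-threshold case. The paper never works at $\eta=\eta_0$ itself: for $\eta\in[\eta_0-\epsilon,\eta_0)$ it drops the factor $e^{-\lambda\tau}\le 1$ so that the characteristic equation becomes a quadratic, solves it explicitly to get $\lambda(\mu,\eta)\le\lambda_0^\epsilon(\mu)\le\frac{\alpha\beta}{a_1}\left(16\mu^2+\frac{4\epsilon}{2\alpha+\eta_0}\right)$, and concludes from $\lambda_0^\epsilon(\mu)/\mu\lesssim\mu+\epsilon/\mu$; note that this last step actually requires coupling the two small parameters (e.g. $\epsilon\sim\mu^2$), a point the paper glosses over with ``independently arbitrary small,'' and which your $\limsup$ argument avoids entirely. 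You instead pass to the limiting transcendental equation at $\eta=\eta_0$, use the identity $G(0,0,\eta_0)=f'(0)-\Gamma(\eta_0)=0$ and the expansion $\lambda(\mu,\eta_0)\sim(B/A)\mu^2$ to see that the limiting problem has zero speed, and transfer back by continuity of the root in $\eta$. Both arguments rest on the same mechanism (at threshold the positive root is quadratically small in $\mu$), but the paper's version is more elementary and yields a quantitative rate ($c^*(\eta)\lesssim\sqrt{\eta_0-\eta}$ after optimizing in $\mu$), while yours is structurally cleaner, keeps the delay term throughout, and localizes the analytic input to $\partial_\lambda G\neq 0$. In case (ii) the difference is cosmetic: the paper again discards $e^{-\lambda\tau}$ and bounds $\lambda$ by the root of a quadratic whose $\eta\to\infty$ limit is the constant $f'(0)-2\beta-\gamma$, whereas you retain the delay and use the root $\lambda_\infty$ of $-(\lambda+2\beta+\gamma)+f'(0)e^{-\lambda\tau}=0$; both then divide by a test value $\mu_*\to\infty$.
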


By  Theorems \ref{ss2} we see that if the population growth is big enough ($f'(0)>2\beta +\gamma$), then no matter how big the death rate $\eta$ in bad locations is, the species can always successfully invade, though the speed decreases in $\eta$.

In a companion paper \cite{AFP2}, we study the scenario that a strong Allee effect is assumed in birth. Then a bistable structure and propagation failure may appear if the diffusion rate $\beta$ is in appropriate ranges. In another companion paper \cite{ASS}, the first author and her collaborators studied the bifurcation dynamics of the model \eqref{model} when the birth function is of unimodal type.

The rest of this paper is organized as follows. In section 2, model derivation details are presented. In section 3, the fundamental solution of a discretized heat equation with periodic diffusion is obtained. Sections 4 and 5 are devoted to the proofs of the main theorems.

\setcounter{equation}{0}
\section{Model formulation and preliminary}
Let $\rho(t,i,a)$ be the population density of the species with age $a$ and at time $ t\ge 0 $ and location $ i\in \Z $. Assume that the evolution of the species obeys the following growth law:
\begin{equation}
\left(\frac{\partial}{\partial t}+\frac{\partial}{\partial a} \right)\rho(t,i,a)=\sum_{j\neq i,j\in\Z}d_{ij}(a)\rho(t,j,a)-
\sum_{j\neq i,j\in\Z}d_{ji}(a)\rho(t,i,a)-r(i,a)\rho(t,i,a),\quad t>0, i\in\Z,
\end{equation}
where $r(i, a) $ is the death rate at location $i$, and $d_{ji}(a)$ is the diffusion rate from location $i$ to location $j$. The derivation of diffusive delayed population models based on the age-structured growth law may go back to the work \cite{SWZ-PRS01} . But for readers' convenience, we give the details below.  

The biological scenario of interest is the one-dimensional periodic habitat $\Z$. For this purpose, we ideally divide $\Z$ into two classes. {\it If $i$ is even, then we call location $i$ good; if $i$ is odd, then we call location $i$ bad.} We assume that population has very distinct behaviors in good and bad locations, which will be specified later.

Let $\tau>0$ be the maturation time of the species. Then the population can be classified into two stages by age: mature and immature. An individual is assumed to belong to mature stage if and only if its age is not less than $\tau$. Hence, the integrand
\begin{equation}
u_i(t)=\int_\tau^\infty\rho(t,i,a)da
\end{equation}
denotes the density of mature population at time $t$ and location $i$.

We make the following biological assumptions:
\begin{enumerate}
\item[(A1)]All individuals in the same stage and the same class of locations share the same characteristics and behaviors;
\item[(A2)]Diffusion only happens in mature stage and symmetrically between adjacent locations;
\item[(A3)]There are no newborns in bad locations.
\end{enumerate}
By (A1), we accordingly assume that
\begin{equation}
d_{i,j}(a)=
\begin{cases}
d_{i,j}^M, & a\ge\tau,\\
d_{i,j}^I, & 0< a<\tau,
\end{cases}
\quad 
r(i, a)=
\begin{cases}
r_i^M, & a\ge\tau,\\
r_i^I, & 0< a<\tau,
\end{cases}
\end{equation}
where $d_{i,j}^M, d_{i,j}^I$ and $r_i^M, r_i^I$ are some constants depending on locations. Combining with (A2), we further assume that
\begin{equation}\label{diffusion}
d_{i,j}^I=0, \forall i,j\in\Z,
\quad
d_{i,j}^M=
\begin{cases}
0, & |i-j|\ge 2,\\
\beta^M, & \text{$i$ is odd and $j=i\pm 1$},\\
\alpha^M, & \text{$i$ is even and $j=i\pm 1$}
\end{cases}
\end{equation}
and
\begin{equation}\label{death}
r_i^M=
\begin{cases}
\eta^M, & \text{$i$ is odd}, \\
\gamma^M, & \text{$i$ is even},
\end{cases}
\quad
r_i^I=
\begin{cases}
\eta^I, & \text{$i$ is odd}, \\
\gamma^I, & \text{$i$ is even},
\end{cases}
\end{equation}
where $\alpha^M, \beta^M, \gamma^M, \eta^M, \gamma^I$ and $\eta^I$ are all positive constants.  By (A3), we assume that
\begin{equation}\label{newborn}
\rho(t,i,0)=
\begin{cases}
0, & \text{$i$ is odd},\\
b(u_i(t)), & \text{$i$ is even},
\end{cases}
\end{equation}
where $b(s)=pse^{-qs}$ is the Ricker type birth function. With the aforementioned assumptions, we differentiate the density $u(t,i)$ of mature population with respect to time $t$, yielding

\begin{equation}\label{orginal-eq}
\begin{small}
\begin{aligned}
&u_i'(t)=\int_{\tau}^{+\infty}\frac{\partial}{\partial{t}}\rho(t,i,a)da \\
&=\int_{\tau}^{+\infty}\!\!\!\left[-\frac{\partial}{\partial{a}}\rho(t,i,a)\!-\!\sum\limits_{j\neq{i},j\in\Z}d^M_{ji}(a)\rho(t,i,a)\!\!+\!\!\sum\limits_{j\neq{i},j\in\Z}d^M_{ij}(a)\rho(t,j,a)\!\!-\!\!r^M_i\rho(t,i,a)\right]da\nonumber\\
&=
\begin{cases}
\rho(t,i,\tau)+\beta^M[u_{i+1}(t)+u_{i-1}(t)]-2\alpha^M u_i(t)-\eta^M u_i(t),& \text{$i$ is odd}\\
\rho(t,i,\tau)+\alpha^M[u_{i+1}(t)+u_{i-1}(t)]-2\beta^M u_i(t)-\gamma^M u_i(t), & \text{$i$ is even},
\end{cases}
\end{aligned}
\end{small}
\end{equation}
where  the biologically reasonable assumption $\rho(t,i,\infty)=0 $ was made and used. To get a closed form of the model, we next calculate $\rho(t,i,\tau)$ in term of $u(t, i)$ in a certain way. Indeed, $\rho(t,i,\tau)$ represents the newly matured population at time $t$ and location $i$. It is the evolution result of newborns at $t-\tau$ and location $i$ since immature population does not move. That is, there is an evolution relation between the quantities $\rho(t,i,\tau)$ and $\rho(t-\tau,i,0)$. More precisely, the relation is the time-$\tau$ solution map of the following evolution equation
\begin{equation}\label{evolution eq}
    \left\{
\begin{aligned}
&\frac{\partial}{\partial s}z(s,i)=-r_i^Iz(s,i),0\leq s\leq \tau\\
&z(0,i)=\rho(t-\tau,i,0),
\end{aligned}
    \right.
\end{equation}
which, combining with \eqref{diffusion}-\eqref{newborn}, implies that
\begin{equation}
\rho(t,i,\tau)=
\begin{cases}
0, & \text{$i$ is odd},\\
e^{-\gamma^I \tau} b(u_i(t-\tau)), & \text{$i$ is even}.
\end{cases}
\end{equation}
Consequently, we obtain the following system modeling the mature population in discrete periodic habitat subject to the biological assumptions (A1)-(A3):
\begin{equation}\label{model1}
\begin{cases}
u_i'(t)= \beta[u_{i+1}(t)+u_{i-1}(t)]-2\alpha u_i(t)-\eta u_i(t),& \text{$i$ is odd},\\
u_i'(t)= \alpha[u_{i+1}(t)+u_{i-1}(t)]-2\beta u_i(t)-\gamma u_i(t)+\mu b(u_{i}(t-\tau)), & \text{$i$ is even},
\end{cases}
\end{equation}
where the superscript $M$ was dropped and
\begin{equation}\label{def-mu}
\mu=\mu(\tau):=e^{-\gamma^I \tau}
\end{equation}
is the survival rate from newborn to being adult in good locations. Setting $\mu b(u)=f(u)$ we obtain the model \eqref{model}.

%

Let $\R_+=[0,+\infty)$. Define 
\begin{equation}
X=C([-\tau,0], \R),\quad X_+=C([-\tau, 0], \R_+).
\end{equation} 
For $u\in X$, define 
\begin{equation}
\|u\|_X=\max_{\theta\in[-\tau, 0]}|u(\theta)|.
\end{equation} 
For any $u$ and $v$ in $X$ we write $u\ge v$ if $u-v\in X_+$, $u>v$ if $u\ge v$ but $u\not\equiv v$, and $u\gg v$ if $u-v\in \text{Int} X_+$, where $\text{Int} X_+$ denotes the interior of $X_+$. Then $(X,X_+,\|\cdot\|)$ is a Banach lattice. For $r_2>r_1$ in $X$, define the order interval $[r_1,r_2]_{X}$ by 
\[
[r_1,r_2]_X:=\{u\in X: r_2\ge u\ge r_1 \}.
\]
For the sake of convenience, we often write $X_{r}$ instead of $[0,r]_{X}$ for $r>0$ in $X$.

Let $\mc{C}$ be the space of all uniformly bounded functions from $\Z$ to $X$. We equip $\mc{C}$ with the compact open topology, that is, $\phi_n$ converges to $\phi$ in $\mc{C}$ if and only if $\phi_n(i)$ converges to $\phi(i)$ in $X$ for each $i\in\Z$. Such a topology can be induced by the following norm
\begin{equation}
\|\phi\|_{\mc{C}}:=\sum_{ k=1}^\infty \f{\max_{|i|\le k}|\phi(i)|_{X}}{2^k}.
\end{equation}
For any $\phi$ and $\psi$ in $\mc{C}$, we write $\phi\ge \psi$ if $\phi(i)\ge \psi(i)$ for all $i\in\Z$, $\phi>\psi$ if $\phi\ge \psi$ but $\phi\neq \psi$, and $\phi\gg\psi$ if there exists $\Gamma=\Gamma(\phi,\psi)>1$ such that $\phi(i)>\Gamma \psi(i)$ for all $i\in\Z$. For $\beta_2>\beta_1$ in $\mc{C}$, we define the order interval $[\beta_1,\beta_2]_\mc{C}$ by
\begin{equation}
[\beta_1,\beta_2]_\mc{C}:=\{\phi\in \mc{C}: \beta_2 \ge \phi\ge \beta_1\}. 
\end{equation}
For the sake of convenience, we often write $\mc{C}_{\beta}$ instead of $[0,\beta]_{\mc{C}}$ for $\beta>0$ in $\mc{C}$.

Define
\begin{equation}\label{def-O}
\mc{O}:=\{\text{all uniformly bounded functions from $\Z$ to $\R$ }\}.
\end{equation} 
Since $\R\subset X$, we regard $\mc{O}$ as a subspace of $\mc{C}$, inheriting the topology and ordering of $\mc{C}$.

In the following we define a class of infinite dimensional matrices and their actions on $\mc{O}$ in the way that we will used. For any sequence of nonnegative real numbers $p_{i,j}, i,j\in\Z$ with 
\begin{equation}\label{c201}
\sum_{j\in\Z}p_{i,j}<+\infty, \quad \text{uniformly in $i\in \Z$,}
\end{equation} 
we define $P:\mc{O}\to \mc{O}$ by 
\begin{equation}
P[\phi](i)=\sum_{j\in \Z}p_{i,j}\phi(j).
\end{equation}
For such $p_{i,j}$ and $P$, we will use $(P)_{i,j}$ to denote $p_{i,j}$ and $(p_{i,j})_{i,j\in\Z}$ to denote $P$ for the sake of convenience. For such two operators $P=(p_{i,j})_{i,j\in\Z}$ and $Q=(q_{i,j})_{i,j\in\Z}$, we may define $PQ: \mc{O}\to \mc{O}$ by $(PQ)_{i,j}=\sum_{k\in\Z}p_{i,k}q_{k,j}$.

Let
\begin{equation}\label{def-matrix}
a_{ij}=
\begin{cases}
\beta, &\text{ $j=i\pm 1$ and $i$ is even}\\
\alpha, &\text{ $j=i\pm 1$ and $i$ is odd}\\
0,& \text{elsewhere}
\end{cases}
\quad \text{and} \quad
b_{ij}=
\begin{cases}
-2\b-\gamma, &\text{ $j=i$ is even}\\
-2\a-\eta, &\text{ $j=i$ is odd}\\
0,& \text{elsewhere}.
\end{cases}
\end{equation}
Clearly, both $a_{i,j}$ and $b_{i,j}$ satisfy \eqref{c201}. Hence, we may similarly define linear operators $A$ and $B$ from $\mc{O}\to \mc{O}$ by
\begin{equation}\label{linear operators}
A[\phi](i)=\sum_{j\in\Z} a_{ij}\phi(j),\quad B[\phi](i)=\sum_{j\in\Z} b_{ij}\phi(j).
\end{equation}
Further, we may define the $n$-th iterations of $A$ and $B$, respectively. For example, 
\begin{equation}
(A^2)_{i,j}=\sum_{k\in\Z} (A)_{i,k}(A)_{k,j}=\sum_{k\in\Z} a_{i,k}a_{k,j}.
\end{equation}
For $t\ge 0$, define $U(t)\in \mc{O}$ with 
\begin{equation}
U(t)(i)=u_i(t),\quad i\in\Z.
\end{equation}
Define $F:\mc{O}\to\mc{O}$ by
\begin{equation}
F[U(t)](i):=
\begin{cases}
f(u_i(t)),& \text{$i$ is even},\\
0,& \text{$i$ is odd}.
\end{cases}
\end{equation}
Then, with these notations we can write \eqref{model} as the following form:
\begin{equation}\label{abstract-eq}
U'(t)=A[U(t)]+B[U(t)]+F(U(t-\tau)),
\end{equation}
which consists of countably many coupled delayed differential equations.

\section{Fundamental solution matrix of $U'=AU$}

The linear system of countably many ordinary differential equations $U'=AU$ can be regarded as a discrete analogue of $u_t=\f{1}{d(x)}(d(x)u_x)_x, x\in\R$ with periodic diffusion coefficient $d(x)$. The purpose of this section is to find the fundamental solution matrix of $U'=AU$.

We will extend the concept of matrix exponential from finite to infinite dimensional matrix. Surprisingly it is not as obvious as we expect. We first formally define $e^{tA}$ using the addition of a series of matrices. Then we prove that $e^{tA}$ maps $\mc{O}$ to $\mc{O}$, where $\mc{O}$ is defined as in \eqref{def-O}. Finally we show that $e^{tA}$ is linear, strongly positive and continuous with respect to the compact open topology. Further, it is the fundamental solution matrix of $U'=AU$.

Define formally $e^{tA}$ by 
\begin{equation}\label{def-etA}
e^{tA}[\phi](i):=\sum_{j\in\Z} \sum_{n=0}^\infty \f{t^n (A^n)_{i,j}}{n!} \phi(j)
\end{equation}
and $(e^{tA})_{i,j}$ by
\begin{equation}
(e^{tA})_{i,j}:=\sum_{n=0}^\infty \f{t^n (A^n)_{i,j}}{n!},
\end{equation}
where $A^0$ is naturally understood as the identity map from $\mc{O}$ to $\mc{O}$. More precisely, $(A^0)_{i,j}=1$ if $i=j$ and $(A^0)_{i,j}=0$ if $i\neq j$. By the definition of $A$ and $A^n$, we can infer that 
\begin{equation}
 (A^n)_{i,j}=\sum_{k=i\pm 1} (A)_{i,k}(A^{n-1})_{k,j}\le \sup\{\alpha,\beta\} [(A^{n-1})_{i-1,j}+(A^{n-1})_{i+1,j}],
\end{equation}
and inductively, 
\begin{equation}
0\le (A^n)_{i,j}\le 2^n(\sup\{\alpha,\beta\})^n,
\end{equation}
from which we see that $\sum_{n=0}^\infty \f{t^n (A^n)_{i,j}}{n!}<+\infty$ and $(e^{tA})_{i,j}$ is well-defined for any $i,j\in\Z$ and $t\ge 0$. 

The main result of this section is as follows.
\begin{theorem}\label{fundamental}
For $t\ge 0$, $e^{tA}$ is a linear, strongly positive and continuous map from $\mc{O}$ to $\mc{O}$. Further, $e^{tA}$ is the fundamental  solution matrix of $U'=AU$.
\end{theorem}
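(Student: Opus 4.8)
The plan is to reduce every assertion except positivity to a single uniform-in-$i$ estimate on the row sums of $e^{tA}$, and to treat strong positivity by a separate connectivity argument. Set $M:=2\sup\{\alpha,\beta\}$ and write $R_n(i):=\sum_{j\in\Z}(A^n)_{i,j}$. The recursion $(A^n)_{i,j}=\sum_k a_{ik}(A^{n-1})_{k,j}$ gives $R_n(i)=\sum_k a_{ik}R_{n-1}(k)$; since $\sum_k a_{ik}=2\beta$ or $2\alpha$, hence $\le M$, for every $i$, and $R_0\equiv 1$, induction yields $R_n(i)\le M^n$ uniformly in $i$. Therefore $\sum_{j}(e^{tA})_{i,j}=\sum_{n}\f{t^n}{n!}R_n(i)\le e^{tM}$ for all $i$ and all $t\ge 0$. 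This one bound does most of the work: it shows the double series defining $e^{tA}[\phi](i)$ converges absolutely (so Fubini permits free rearrangement), it gives $|e^{tA}[\phi](i)|\le e^{tM}\|\phi\|_\infty$ uniformly in $i$ so that $e^{tA}$ maps $\mc{O}$ into $\mc{O}$, and linearity is then immediate from absolute convergence.

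For strong positivity I would argue, for $t>0$, that $(e^{tA})_{i,j}>0$ for all $i,j$. Since all summands are nonnegative, $(e^{tA})_{i,j}=\sum_n\f{t^n}{n!}(A^n)_{i,j}>0$ as soon as $(A^m)_{i,j}>0$ for some $m$. The support graph of $A$ (with $i\sim j$ iff $a_{ij}>0$) is exactly the nearest-neighbour graph on $\Z$, which is connected with symmetric adjacency, so there is a walk of length $m=|i-j|$ from $j$ to $i$ along which every factor is positive, whence $(A^{|i-j|})_{i,j}>0$. Thus for $\phi\ge 0$, $\phi\not\equiv 0$, choosing $j_0$ with $\phi(j_0)>0$ gives $e^{tA}[\phi](i)\ge (e^{tA})_{i,j_0}\phi(j_0)>0$ for every $i$, which is precisely $e^{tA}[\phi]\gg 0$ in $\mc{O}$. (At $t=0$ we have $e^{0A}=\Id$, positive but not strongly so; strong positivity is a $t>0$ statement.)

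Continuity in the compact-open topology I would verify on uniformly bounded sets such as the order intervals $\mc{C}_{\beta}$, which is the setting required by the dynamical-systems machinery and on which convergence $\phi_n\to\phi$ is pointwise at each site with a common sup-bound. Fixing $i$, the difference $e^{tA}[\phi_n](i)-e^{tA}[\phi](i)=\sum_j(e^{tA})_{i,j}(\phi_n(j)-\phi(j))$ has summands dominated by a constant times $(e^{tA})_{i,j}$, which is summable by the row-sum bound, and each summand tends to $0$, so dominated convergence yields pointwise convergence of the images. For the fundamental-solution property I would differentiate termwise: after the shift $m=n-1$, $\f{d}{dt}(e^{tA})_{i,j}=\sum_{n\ge 1}\f{t^{n-1}}{(n-1)!}(A^n)_{i,j}=(Ae^{tA})_{i,j}$. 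The bound $(A^n)_{i,j}\le M^n$ makes this differentiated series converge uniformly on compact $t$-intervals, legitimising termwise differentiation; a second use of the row-sum bound, $\sum_j(Ae^{tA})_{i,j}\le Me^{tM}$, legitimises interchanging $\f{d}{dt}$ with $\sum_j$ in $e^{tA}[\phi](i)=\sum_j(e^{tA})_{i,j}\phi(j)$, while the finite sum defining $A$ lets me pull $A$ outside. Hence $\f{d}{dt}e^{tA}[\phi]=A[e^{tA}[\phi]]$; since only the $n=0$ term survives at $t=0$ we get $e^{0A}=\Id$, so $t\mapsto e^{tA}[\phi]$ solves $U'=AU$ with $U(0)=\phi$, i.e. $e^{tA}$ is the fundamental solution matrix.

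The main obstacle is not any single computation but the repeated need to interchange limits — sum with sum, sum with derivative, and the (finite) application of $A$ with the infinite site-sum — in a genuinely infinite-dimensional matrix calculus, where such manipulations are not automatically valid. Everything analytic hinges on the uniform-in-$i$ geometric estimate $(A^n)_{i,j}\le M^n$ and its consequence $\sum_j(e^{tA})_{i,j}\le e^{tM}$, which is exactly what makes Fubini, dominated convergence, and uniform convergence of the differentiated series applicable; the super-exponential decay of $(e^{tA})_{i,j}$ in $|i-j|$ is also what would upgrade continuity beyond order intervals if desired. The only genuinely different ingredient is strong positivity, which rests on the combinatorial irreducibility of the nearest-neighbour support graph rather than on these analytic bounds.
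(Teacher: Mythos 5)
Your proof is correct, but it reaches the theorem by a genuinely different and more elementary route than the paper. The paper's engine is an explicit binomial formula for $(A^n)_{i,j}$ (Lemma \ref{Anij}, proved by a lengthy two-case induction), from which it extracts entrywise bounds on $(e^{tA})_{i,j}$ with factorial decay in $|i-j|$ (Lemma \ref{uppbd}); you replace all of this by the sub-stochastic row-sum induction $\sum_j(A^n)_{i,j}\le M^n$ with $M=2\max\{\alpha,\beta\}$, hence $\sum_j(e^{tA})_{i,j}\le e^{tM}$, which indeed suffices for well-definedness, linearity, $e^{tA}(\mc{O})\subset\mc{O}$, and the termwise differentiation yielding the fundamental-solution property (your interchange justifications are the counterpart of the paper's Fubini step). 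The strong-positivity argument, a positive walk of length $|i-j|$ in the nearest-neighbour graph, is identical to the paper's, and your parenthetical that strong positivity is really a $t>0$ statement is a fair catch which the paper glosses over. Where the two approaches genuinely diverge in strength is continuity: the paper's Lemma \ref{continuity} proves the operator-norm bound $\|e^{tA}\phi\|_{\mc{O}}\le C_4(t)\|\phi\|_{\mc{O}}$ with $C_4(t)\to1$ as $t\to0$, and this quantitative bound is not decoration --- it is exactly what is fed into the Gronwall estimate proving continuity of the nonlinear semiflow (Lemma \ref{semiflow}). Your dominated-convergence argument gives continuity only on sup-bounded sets such as order intervals; this is a defensible reading (pointwise-convergence continuity on all of $\mc{O}$ is in fact false: take $\phi_n$ supported at site $n$ with value $1/(e^{tA})_{0,n}$, so that $\phi_n\to0$ pointwise while $e^{tA}[\phi_n](0)=1$), and it is enough for the Liang--Zhao machinery \cite{LZ-JFA10} on $\mc{C}_{U^*}$, but it is strictly weaker than Lemma \ref{continuity}, and the row-sum bound alone cannot produce that norm estimate, since $\|\cdot\|_{\mc{O}}$ controls $|\phi(j)|$ only up to the weight $2^{|j|}$. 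As you hint at the end, your framework can be upgraded: finite propagation $(A^n)_{i,j}=0$ for $n<|i-j|$ together with $(A^n)_{i,j}\le M^n$ gives $(e^{tA})_{i,j}\le\sum_{n\ge|i-j|}(tM)^n/n!$, whose factorial decay in $|i-j|$ beats the weight $2^{|j|}$, after which the re-summation in the paper's Lemma \ref{continuity} goes through; if your proof is meant to substitute for the paper's in the rest of the argument, that upgrade should be carried out explicitly.
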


Such a result has its own interest and will also be essentially helpful to define the solution semiflow of the nonlinear problem \eqref{abstract-eq} in the next section, where we apply some dynamical system theories to study the propagation dynamics of \eqref{abstract-eq}.

To prove Theorem \eqref{fundamental}, we proceed with a series of lemmas. The first one is the explicit expression of $(A^{n})_{ i,j}$.

\begin{lemma}\label{Anij}
$(A^{n})_{ i,j}, n\ge 1$ has the following expression:
\begin{equation}\label{An}
(A^{n})_{ i,j}=
\left(\f{\b}{\a}\right)^\f{(-1)^{i+n}+(-1)^i}{4}(\a\b)^{\f{n-1}{2}}\displaystyle\sum_{m=0}^{n-1}\binom{n-1}{m}a_{ i-(n-1-2m),j}, \quad i,j\in\Z.
\end{equation}	
\end{lemma}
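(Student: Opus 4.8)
The plan is to prove the formula by induction on $n$, exploiting the recursion $(A^n)_{i,j}=\sum_{k\in\Z}a_{i,k}(A^{n-1})_{k,j}$. Conceptually, $(A^n)_{i,j}$ is a sum over nearest-neighbor lattice walks of length $n$ from $i$ to $j$, each weighted by the product of the hopping coefficients along its steps; since the weight of the $l$-th step depends only on the parity of the position reached after $l$ steps, which is forced to be that of $i+l$ regardless of the walk, every such walk carries the \emph{same} weight, and the entry factors as (common weight) $\times$ (number of walks). This is exactly the content of the claimed formula: its binomial sum encodes the walk count, while the prefactor $(\beta/\alpha)^{((-1)^{i+n}+(-1)^i)/4}(\alpha\beta)^{(n-1)/2}$ encodes the common weight. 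I will use this picture as a guide but carry out the induction directly.

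For the base case $n=1$ the exponent $((-1)^{i+1}+(-1)^i)/4$ vanishes, $(\alpha\beta)^0=1$, and the single-term sum collapses to $a_{i,j}$, so the formula reduces to $(A)_{i,j}=a_{i,j}$, which holds by definition. For the inductive step I note that $A$ has nonzero entries only at $(i,i\pm1)$, with $a_{i,i-1}=a_{i,i+1}=:d_i$ equal to $\beta$ when $i$ is even and $\alpha$ when $i$ is odd; hence
\[
(A^n)_{i,j}=d_i\left[(A^{n-1})_{i-1,j}+(A^{n-1})_{i+1,j}\right].
\]
I then substitute the induction hypothesis for $(A^{n-1})_{i\pm1,j}$ and reorganize the result into two independent pieces: the scalar prefactor and the binomial sum.

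The key bookkeeping is the exponent of $\beta/\alpha$. Since $i\pm1$ has parity opposite to $i$ while $(i\pm1)+(n-1)$ has the same parity as $i+n$, the exponent appearing in the hypothesis is $((-1)^{i+n}-(-1)^i)/4$, which differs from the target exponent $((-1)^{i+n}+(-1)^i)/4$ by exactly $(-1)^i/2$. A short case check on the parity of $i$ then shows that $d_i\,(\beta/\alpha)^{-(-1)^i/2}(\alpha\beta)^{(n-2)/2}=(\alpha\beta)^{(n-1)/2}$ in both cases, so the prefactor comes out correctly and uniformly. For the binomial part, after reindexing I write $(i+1)-(n-2-2m)=i-(n-1-2(m+1))$ and $(i-1)-(n-2-2m)=i-(n-1-2m)$, so that the coefficient of $a_{i-(n-1-2m),j}$ becomes $\binom{n-2}{m-1}+\binom{n-2}{m}=\binom{n-1}{m}$ by Pascal's rule (with the usual boundary conventions), yielding exactly the claimed sum.

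The main obstacle is not any single deep idea but the accurate management of the parity-dependent exponent: one must verify that the opposite parity of $i\pm1$, the shift from $n$ to $n-1$, and the factor $d_i$ all conspire to reproduce the prefactor, and this forces a split into the cases $i$ even and $i$ odd. The index-shift-plus-Pascal step is routine once the reindexing is set up correctly; the only risk there is an off-by-one in the binomial boundary terms, handled by the conventions $\binom{n-2}{-1}=\binom{n-2}{n-1}=0$.
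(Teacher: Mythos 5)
Your proof is correct, and it takes a genuinely cleaner route than the paper's. Both arguments are inductions on the defining recursion for $A^n$, but the paper steps by two: it runs separate inductions for odd and even $n$, writing $A^{n+2}=A^{n}A^{2}$ and multiplying the inductive expression on the right by $(A^{2})_{r,j}=a_{r,r-1}[a_{r-1,j}+a_{r+1,j}]$. Stepping by two keeps the prefactor $(\beta/\alpha)^{((-1)^{i+n}+(-1)^{i})/4}$ literally unchanged along the induction (since $(-1)^{i+n}$ is invariant under $n\mapsto n+2$), so the paper never has to track how the exponent evolves; the price is that the entire computation is duplicated for the two parities of $n$, and the coefficient bookkeeping requires the heavier identity $\binom{2k-2}{s}+2\binom{2k-2}{s-1}+\binom{2k-2}{s-2}=\binom{2k}{s}$. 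You instead do a single one-step induction $A^{n}=A\cdot A^{n-1}$, which forces you to confront the exponent shift from $n-1$ to $n$ -- the difference $(-1)^{i}/2$ absorbed exactly by $d_{i}=(\beta/\alpha)^{(-1)^{i}/2}(\alpha\beta)^{1/2}$, checked by the two parities of $i$ -- but in exchange you need only Pascal's rule $\binom{n-2}{m-1}+\binom{n-2}{m}=\binom{n-1}{m}$ and one unified computation. Your parity bookkeeping is accurate: $(i\pm1)+(n-1)$ indeed has the parity of $i+n$, $(-1)^{i\pm1}=-(-1)^{i}$, and the reindexing $(i+1)-(n-2-2m)=i-(n-1-2(m+1))$ with the conventions $\binom{n-2}{-1}=\binom{n-2}{n-1}=0$ handles the boundary terms. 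The random-walk interpretation (all length-$n$ nearest-neighbor paths from $i$ to $j$ carry the same weight, because the weight of each step is determined by a parity that is forced by $i$ and the step index alone) is a conceptual explanation absent from the paper, and it makes clear a priori why the entry must factor as a common weight times a walk count.
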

\begin{proof}
We proceed with the induction argument. 

We first prove the case where $n$ is odd. Since $(A)_{ i,j}=a_{i,j}, i,j\in\Z$, it then follows that \eqref{An} holds for $n=1$. Fix $k \in \Z$ with $k\ge 1$. Suppose \eqref{An} holds for $n=2k-1$, i.e., 
\begin{equation}\label{eq300}
(A^{2k-1})_{i,j}=(\a\b)^{k-1}\displaystyle\sum_{m=0}^{2k-2}\binom{2k-2}{m}a_{ i-(2k-2-2m),j}.
\end{equation}
Below we prove \eqref{An} also holds for $n=2k+1$. Note that
\begin{equation}\label{eq301}
(A^{2k+1})_{ i,j}=(A^{2k-1}A^{2})_{ i,j}=\sum_{r\in\Z} (A^{2k-1})_{i,r}(A^{2})_{r,j}
\end{equation}
and
\begin{equation}\label{eq302}
(A^{2})_{r,j}=\sum_{s\in\Z}a_{r,s}a_{s,j}=a_{r,r-1}a_{r-1,j}+a_{r,r+1}a_{r+1,j}=a_{r,r-1}[a_{r-1,j}+a_{r+1,j}]
\end{equation}
in virtue of the definition of $a_{i,j}$ in \eqref{def-matrix}. Combining \eqref{eq300}-\eqref{eq302} leads to
\begin{equation}\label{eq303}
(A^{2k+1})_{ i,j} =\sum_{r\in\Z} (\a\b)^{k-1}\sum_{m=0}^{2k-2}\binom{2k-2}{m}a_{ i-(2k-2-2m),r}a_{r,r-1}[a_{r-1,j}+a_{r+1,j}].
\end{equation}
By again the definition of $a_{i,j}$, we see that 
\begin{eqnarray}
a_{ i-(2k-2-2m),r}a_{r,r-1}=
\begin{cases}
\alpha\beta, &r=i-(2k-2-2m)\pm 1,\\
0, & \text{elsewhere},
\end{cases}
\end{eqnarray}
which, combining with \eqref{eq303}, implies that
\begin{eqnarray}\label{eq304}
&&(A^{2k+1})_{ i,j}\nonumber\\
= &&(\a\b)^{k}\sum_{m=0}^{2k-2}\binom{2k-2}{m} \sum_{r=i-(2k-2-2m)\pm 1}[a_{r-1,j}+a_{r+1,j}] \nonumber\\
=&&(\a\b)^{k}\sum_{m=0}^{2k-2}\binom{2k-2}{m} [a_{i-(2k-2m),j}+ 2a_{i-(2k-2-2m),j}+a_{i-(2k-4-2m),j}],
\end{eqnarray}
which consists of a linear combination of the following terms:
\[
a_{i-(2k-2s),j}, \quad  \text{$s, k\in\Z$ with $0\le s\le 2k$}.
\]
Reorganizing the order of sums in \eqref{eq304} we obtain the coefficients (denoted by $c_s$) for $a_{i-(2k-2s),j}, 0\le s\le 2k$. They read
\[
c_s(\a\b)^{-k}=
\begin{cases}
\binom{2k-2}{0}, & s=0,\\
\binom{2k-2}{1}+2\binom{2k-2}{0},&s=1,\\
\binom{2k-2}{s}+2\binom{2k-2}{s-1}+\binom{2k-2}{s-2}, &2\le s\le 2k-2,\\
2\binom{2k-2}{2k-2}+\binom{2k-2}{2k-3},&s=2k-1,\\
\binom{2k-2}{2k-2}, &s=2k.
\end{cases}
\]
After computing these binomial coefficients we arrive at
\begin{equation}
c_s(\a\b)^{-k}=\binom{2k}{s}, \quad 0\le s\le 2k,
\end{equation}
and hence, 
\begin{equation}
(A^{2k+1})_{ i,j}=(\a\b)^{k}\sum_{s=0}^{2k}\binom{2k}{s}a_{i-(2k-2s),j},
\end{equation}
which is exactly \eqref{An} with $n=2k+1$.

Next we prove \eqref{An} when $n$ is even. From \eqref{eq302} and the definition of $a_{i,j}$ we see that
\begin{equation}
(A^{2})_{i,j}=a_{i,i-1}[a_{i-1,j}+a_{i+1,j}]=\begin{cases}
\a(a_{i-1,j}+a_{i+1,j}), & \text{$i$ is odd;}\\
\b(a_{i-1,j}+a_{i+1,j}), &\text{$i$ is even}.
\end{cases}
\end{equation}
Therefore, \eqref{An}  is true for $n=2.$  Fix $k \in \Z$ with $k\ge 1$. Suppose \eqref{An} holds for $n=2k$, that is, 
\begin{equation}
(A^{2k})_{i,j}=\left(\f{\b}{\a}\right)^\f{(-1)^i}{2}(\a\b)^{\f{2k-1}{2}}\displaystyle\sum_{m=0}^{2k-1}\binom{2k-1}{m}a_{ i-(2k-1-2m),j}.
\end{equation}
Below we prove it is also true for $n=2k+2$.  Indeed, note that
\begin{eqnarray}\label{eq3010}
&&(A^{2k+2})_{ i,j}\nonumber\\
=&&(A^{2k}A^{2})_{ i,j}\nonumber\\
=&& \sum_{r\in \Z}(A^{2k})_{i,r}(A^{2})_{r,j}\nonumber\\
=&&\sum_{r\in \Z}\left(\f{\b}{\a}\right)^\f{(-1)^i}{2}(\a\b)^{\f{2k-1}{2}}\displaystyle\sum_{m=0}^{2k-1}\binom{2k-1}{m}a_{ i-(2k-1-2m),r}a_{r,r-1}[a_{r-1,j}+a_{r+1,j}]
\end{eqnarray}
From the definition of $a_{ij}$, we have
\[
a_{ i-(2k-1-2m),r}a_{r,r-1}=
\begin{cases}
\a\b, & r=i-(2k-1-2m)\pm 1,\\
0, & \text{elsewhere}.
\end{cases}
\]
Hence, \eqref{eq3010} becomes 
\begin{small}
\begin{eqnarray*}
&&(A^{2k+2})_{ i,j}\nonumber\\
=&&\left(\f{\b}{\a}\right)^\f{(-1)^i}{2}(\a\b)^{\f{2k+1}{2}}\displaystyle\sum_{m=0}^{2k-1}\binom{2k-1}{m}\sum_{r=i-(2k-1-2m)\pm 1}[a_{r-1,j}+a_{r+1,j}]\nonumber\\
=&&\left(\f{\b}{\a}\right)^\f{(-1)^i}{2}(\a\b)^{\f{2k+1}{2}}\displaystyle\sum_{m=0}^{2k-1}\binom{2k-1}{m}[a_{i-(2k-2m),j}+2a_{i-(2k-2-2m),j}+a_{i-(2k-4-2m),j}],
\end{eqnarray*}
\end{small}
which consists of a linear combination of the following terms:
\[
a_{i-(2k+1-2s),j}, \quad \text{$s,k\in\Z$ with $0\le s\le 2k+1$}.
\]
Reorganizing the order of sums in \eqref{eq306} we obtain the coefficients (denoted by $d_s$) for $a_{i-(2k+1-2s),j}$ with $0\le s\le 2k+1$. They read
\[
d_s= \left(\f{\b}{\a}\right)^{\f{(-1)^i}{2}}(\a\b)^{\f{2k+1}{2}}
\begin{cases}
\binom{2k-1}{0}, & s=0,\\
\binom{2k-1}{1}+2\binom{2k-1}{0},&s=1,\\
\binom{2k-1}{s}+2\binom{2k-1}{s-1}+\binom{2k-1}{s-2}, &2\le s\le 2k-1,\\
2\binom{2k-1}{2k-1}+\binom{2k-1}{2k-2},&s=2k,\\
\binom{2k-1}{2k-1}, &s=2k+1.
\end{cases}
\]
After computing these binomial coefficients we arrive at
\begin{equation}
d_s=\left(\f{\b}{\a}\right)^\f{(-1)^i}{2}(\a\b)^{\f{2k+1}{2}}\binom{2k+1}{s}, \quad 0\le s\le 2k+1,
\end{equation}
and hence, 
\begin{equation}
(A^{2k+2})_{ i,j}=\left(\f{\b}{\a}\right)^\f{(-1)^i}{2}(\a\b)^{\f{2k+1}{2}}\displaystyle\sum_{s=0}^{2k+1}\binom{2k+1}{s}a_{i-(2k+1-2s)},
\end{equation}
which is exactly \eqref{An} with $n=2k+2$.
\end{proof}

With the expression of $A^n_{i,j}$, we can give a bound for $(e^{tA})_{i,j}$.

\begin{lemma}\label{uppbd}
There exists $C_1,C_2>0$, depending only on $\alpha$ and $\beta$,  such that 
\begin{equation}\label{upp1}
0<(e^{tA})_{i,i}\le 1+C_1t e^{\alpha\beta t^2}+C_2(e^{\alpha\beta t^2}-1),\quad i\in\Z, t>0
\end{equation}
and
\begin{equation}\label{upp2}
0<(e^{tA})_{i+l,i}\le  C_1 t^2 \sum_{k\ge \f{|l|-1}{2}} \f{t^{2k}}{k!}(\a\b)^{k} +C_2  \sum_{k\ge \f{|l|}{2}} \f{t^{2k}}{k!}(\a\b)^k,\quad i\in\Z, l\in \Z\setminus\{0\}, t>0.
\end{equation}
\end{lemma}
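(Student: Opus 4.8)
The plan is to substitute the closed form for $(A^n)_{i,j}$ from Lemma~\ref{Anij} into the defining series $(e^{tA})_{i+l,i}=\sum_{n\ge0}\frac{t^n}{n!}(A^n)_{i+l,i}$ and estimate it term by term. The first thing I would record is a support/parity reduction: since $a_{p,q}\neq0$ only when $|p-q|=1$, the factor $a_{(i+l)-(n-1-2m),i}$ appearing in \eqref{An} forces both $n\ge|l|$ and $n\equiv l\pmod2$, and for each admissible $n$ at most two values of $m$ survive. Thus most of the series drops out, and the apparently complicated inner sum $\sum_{m}\binom{n-1}{m}a_{(i+l)-(n-1-2m),i}$ collapses to a sum of at most two binomial coefficients.

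Next I would combine those two surviving binomials. Their upper indices differ by one, so Pascal's rule $\binom{n-1}{j}+\binom{n-1}{j-1}=\binom{n}{j}$ merges them into the single coefficient $\binom{n}{(n-l)/2}$ (one of the two terms is absent at the endpoints $n=|l|$ and when $l=0$, cases I would treat separately). Simultaneously the prefactor $(\beta/\alpha)^{((-1)^{i+n}+(-1)^i)/4}$ is bounded by a constant depending only on $\alpha,\beta$, and the surviving nearest-neighbour weight equals $a_{i\pm1,i}\in\{\alpha,\beta\}$; absorbing this weight into the half-integer power lets me write $(\alpha\beta)^{(n-1)/2}a_{i\pm1,i}=(\alpha\beta)^{\lfloor n/2\rfloor}\cdot(\text{bounded constant})$. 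The upshot is a clean per-term estimate $0<(A^n)_{i+l,i}\le C(\alpha\beta)^{\lfloor n/2\rfloor}\binom{n}{(n-l)/2}$ with $C=C(\alpha,\beta)$, valid for every admissible $n$.

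With this in hand the rest is bookkeeping on factorials. The decisive identity is $\binom{n}{(n-l)/2}/n!=1/\big(\big(\tfrac{n-l}{2}\big)!\,\big(\tfrac{n+l}{2}\big)!\big)$, together with $\binom{n}{(n-l)/2}\le\binom{n}{\lfloor n/2\rfloor}$, which converts the troublesome $n!$ of the exponential series into a product of two factorials and then into a factor $1/(k!)^2\le1/k!$. I would then split the series by the parity of $n$: the even indices $n=2k$ (which occur exactly when $l$ is even, $k\ge|l|/2$) reproduce, after these estimates, a tail of $\sum_k(\alpha\beta)^kt^{2k}/k!$ dominated by the second summand $C_2\sum_{k\ge|l|/2}\tfrac{t^{2k}}{k!}(\alpha\beta)^k$; the odd indices $n=2k+1$ (occurring when $l$ is odd, $k\ge(|l|-1)/2$) carry an additional factor of $t$ and reproduce the first summand, with its lower starting index $(|l|-1)/2$. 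Comparing each tail with $e^{\alpha\beta t^2}$ (equivalently $e^{\alpha\beta t^2}-1$ once the leading factorial is stripped) yields \eqref{upp2}; the diagonal estimate \eqref{upp1} is the same computation at $l=0$, the additive $1$ being the $n=0$ term and the even-$n$ tail giving $C_2(e^{\alpha\beta t^2}-1)$. Strict positivity in both cases is immediate, since the single term $n=|l|$ is strictly positive and $t>0$.

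The part I expect to be most delicate is the coefficient bookkeeping in the middle step: verifying that the two surviving terms of \eqref{An} really combine into exactly one binomial (so that the clean factorial identity applies), correctly handling the half-integer power of $\alpha\beta$ that appears for even $n$, and checking the boundary indices $n=|l|$ and $l=0$ where only one of the two binomials is present. Once the per-term bound $(A^n)_{i+l,i}\le C(\alpha\beta)^{\lfloor n/2\rfloor}\binom{n}{(n-l)/2}$ is in place, matching the two parity classes to the two summands and to their respective starting indices is routine.
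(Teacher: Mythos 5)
Your overall route is the same as the paper's: substitute the formula of Lemma \ref{Anij} into the exponential series, observe that for each $n$ at most two terms of the inner sum survive, estimate the surviving binomial coefficients, and compare the resulting series with tails of $e^{\alpha\beta t^2}$. Your bookkeeping is in fact cleaner and somewhat sharper: you merge the two surviving binomials exactly by Pascal's rule into $\binom{n}{(n-l)/2}$ and use $\binom{n}{(n-l)/2}/n!=1/\bigl(\bigl(\tfrac{n-l}{2}\bigr)!\,\bigl(\tfrac{n+l}{2}\bigr)!\bigr)$, where the paper instead bounds each surviving binomial by the central one and invokes \eqref{ineqnew1}; and you exploit the parity constraint $n\equiv l\pmod{2}$, which the paper never uses. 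In particular your diagonal bound $1+C(e^{\alpha\beta t^2}-1)$ is sharper than \eqref{upp1}, which it implies since the extra term there is positive.

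The gap is in your final matching step, for odd $l$. Your (correct) estimate of the odd-index part is
\begin{equation*}
\sum_{k\ge\frac{|l|-1}{2}}\frac{t^{2k+1}}{(2k+1)!}(A^{2k+1})_{i+l,i}\;\le\; C\,t\sum_{k\ge\frac{|l|-1}{2}}\frac{t^{2k}}{k!}(\alpha\beta)^{k},
\end{equation*}
with a single factor of $t$, whereas the first summand of \eqref{upp2} carries $C_1t^2$. For $t<1$ a bound with $t$ does not imply a bound with $t^2$, so the claim that the odd part ``reproduces the first summand'' is unjustified. Moreover, this gap cannot be closed by any argument, because \eqref{upp2} as printed is false for small $t$: take $l=1$; since all terms of the series are nonnegative, the single term $n=1$ gives $(e^{tA})_{i+1,i}\ge t\,a_{i+1,i}\ge\min\{\alpha,\beta\}\,t$, while the right-hand side of \eqref{upp2} equals $C_1t^2e^{\alpha\beta t^2}+C_2(e^{\alpha\beta t^2}-1)=O(t^2)$ as $t\downarrow 0$. (More generally, for every odd $l$ the left side is of order $t^{|l|}$ while the right side is $O(t^{|l|+1})$.) The paper's own proof commits exactly the same slip: in the first inequality of \eqref{ineq214}, the odd part, which by \eqref{ineq213} and \eqref{ineqnew1} is bounded by $C_1\sum_{k\ge(|l|+1)/2}\frac{t^{2k-1}}{(k-1)!}(\alpha\beta)^{k-1}$, is written with an extra prefactor $t$, legitimate only when $t\ge 1$. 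So what your argument actually proves --- and what the paper's argument actually proves --- is \eqref{upp2} with $C_1t$ in place of $C_1t^2$. That corrected version is all that is needed downstream: Lemmas \ref{otoo} and \ref{continuity} only require summability in $l$ and a constant $C_4(t)\to 1$ as $t\to 0$, and both survive the replacement of $t^2$ by $t$. You should state and prove that corrected inequality rather than chase the printed one.
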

Before the proof, we remark that such a bound suggests that $e^{tA}$ tends to the identity map in a certain sense as $t\to0$ and $e^{tA}$ is strongly positive.

\begin{proof}
We first show that $(e^{tA})_{i,j}>0$ for any $i,j\in\Z$ and $t>0$. Indeed, from \eqref{An} and \eqref{def-matrix} we can infer that $(A^n)_{i,j}\ge 0$ for any $i,j,n$ and $(A^n)_{i,j}>0$ for $n=|i-j|$. Therefore, $(e^{tA})_{i,j}=\sum_{n\ge 0}\f{t^n}{n!}(A^n)_{i,j}\ge \f{t^n}{n!}(A^n)_{i,j}|_{n=|i-j|}>0$.

Next we derive the upper bound for $(e^{tA})_{i,j}$. From the expressions of  $(A^n)_{i,j}$ obtained in Lemma \ref{Anij} we can infer that 
\begin{eqnarray}
0 \le (A^{n})_{i,j} \le (\a\b)^{\f{n-1}{2}}\max \left\{ \sqrt{\f{\b}{\a}},\sqrt{\f{\a}{\b}}\right\}\displaystyle\sum_{m=0}^{n-1}\binom{n-1}{m}a_{i-(n-1-2m),j}.
\end{eqnarray}
By the definition of $a_{i,j}$ we see that 
\begin{equation}
a_{i-(n-1-2m),j}=0, \quad \text{for $|i-j|>n$ and $0\le m\le n-1$}.
\end{equation}
Consequently, we have 
\begin{equation}
\sum_{m=0}^{n-1}\binom{n-1}{m}a_{i-(n-1-2m),j} =0,  \quad \text{when $|i-j|>n$}
\end{equation}
and
\begin{equation}
\sum_{m=0}^{n-1}\binom{n-1}{m}a_{i-(n-1-2m),j} \le 2 \max\{\alpha,\beta\} \max_{0\le m\le n-1}\binom{n-1}{m},  \quad \text{when $|i-j|\le n$}.
\end{equation}
Note that 
\begin{eqnarray}\label{ineq211}
\max_{0\le m\le n-1}\binom{n-1}{m}=\begin{cases}
\binom{n-1}{\f{n-1}{2}}, & \text{$n-1$ is even}\\
\binom{n-1}{\f{n}{2}}, & \text{$n-1$ is odd}.
\end{cases}
\end{eqnarray}
Define 
\begin{equation}
C_1:=2 \max \left\{ \sqrt{\f{\b}{\a}},\sqrt{\f{\a}{\b}}\right\}\max\{\alpha,\beta\}.
\end{equation} 
Then for $k\ge 1$ we obtain
\begin{equation}\label{ineq212}
(A^{2k})_{i,j}
\begin{cases}
=0, & |i-j|>2k,\\
\le  C_1 (\a\b)^{\f{2k-1}{2}}\binom{2k-1}{k}, & |i-j|\le 2k
\end{cases}
\end{equation}
and 
\begin{equation}\label{ineq213}
 (A^{2k-1})_{i,j}\begin{cases}
=0, & |i-j|>2k-1,\\
\le  C_1 (\a\b)^{k-1}\binom{2k-2}{k-1}, & |i-j|\le 2k-1
\end{cases}
\end{equation}

In the following, we proceed with two cases: (i) $i=j$; (ii) $i\neq j$.  

Case (i). Note that
\begin{equation}
(e^{tA})_{i,i}=\sum_{n\ge 0} \f{t^n}{n!} (A^n)_{i,i}=1+\sum_{k\ge 1}\f{t^{2k}}{(2k)!} (A^{2k})_{i,i}+\sum_{k\ge 1}\f{t^{(2k-1)}}{(2k-1)!} (A^{2k-1})_{i,i}.
\end{equation}
Combining with inequalities \eqref{ineq212}, \eqref{ineq213} and
\begin{equation}\label{ineqnew1}
\f{1}{(2k)!}\binom{2k-1}{k}\le \f{1}{k!},\quad \f{1}{(2k-1)!}\binom{2k-2}{k-1}\le \f{1}{(k-1)!},
\end{equation}
we obtain
\begin{eqnarray}
(e^{tA})_{i,i} &\le& 1+C_1(\a\b)^{-\f{1}{2}} \sum_{k\ge 1}\f{t^{2k}}{k!}(\a\b)^k  +C_1 t \sum_{k\ge 1} \f{t^{2k-1}}{(k-1)!}(\a\b)^{k-1}\nonumber\\
&\le & 1+C_1(\a\b)^{-\f{1}{2}} (e^{\alpha\beta t^2}-1)+C_1 t e^{\alpha\beta t^2}.
\end{eqnarray}
Denoting $C_1(\a\b)^{-\f{1}{2}}$ by $C_2$ we obtain \eqref{upp1}.  

Case (ii). Using the first part of inequalities \eqref{ineq211} and \eqref{ineq213}, for $l\neq 0$ we obtain
\begin{equation}
(e^{tA})_{i+l,i}=\sum_{n\ge 0} \f{t^n}{n!} (A^n)_{i+l,i}=\sum_{k\ge |j|/2}\f{t^{2k}}{(2k)!}(A^{2k})_{l+j,l}+\sum_{k\ge (|j|+1)/2}\f{t^{2k-1}}{(2k-1)!}(A^{2k-1})_{l+j,l},
\end{equation}
which, together with the second part of \eqref{ineq211} and \eqref{ineq213} as well as \eqref{ineqnew1}, implies that

\begin{eqnarray}\label{ineq214}
(e^{tA})_{i+l,i} &&\le C_1(\a\b)^{-\f{1}{2}} \sum_{k\ge \f{|l|}{2}} \f{t^{2k}}{k!}(\a\b)^k + C_1 t \sum_{k\ge \f{|l|+1}{2}} \f{t^{2k-1}}{(k-1)!}(\a\b)^{k-1} \nonumber\\
&&= C_2  \sum_{k\ge \f{|l|}{2}} \f{t^{2k}}{k!}(\a\b)^k+ C_1 t^2 \sum_{k\ge \f{|l|-1}{2}} \f{t^{2k}}{k!}(\a\b)^{k} .
\end{eqnarray}
The proof is complete. 
\end{proof}

\begin{lemma}\label{otoo}
$e^{tA}:\mc{O}\to \mc{O}$.
\end{lemma}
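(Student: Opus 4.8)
The plan is to read $e^{tA}[\phi]$ off entrywise and reduce everything to a uniform bound on the row sums $\sum_{j\in\Z}(e^{tA})_{i,j}$. Concretely, fix $\phi\in\mc{O}$ and set $M:=\sup_{j\in\Z}|\phi(j)|<\infty$. Since every $(e^{tA})_{i,j}\ge 0$, for each $i\in\Z$ and $t>0$ one has the crude estimate
\begin{equation}
|e^{tA}[\phi](i)|=\Big|\sum_{j\in\Z}(e^{tA})_{i,j}\phi(j)\Big|\le M\sum_{j\in\Z}(e^{tA})_{i,j},
\end{equation}
so both the absolute convergence of the defining series \eqref{def-etA} and the membership $e^{tA}[\phi]\in\mc{O}$ will follow at once if I can show that $S(t):=\sup_{i\in\Z}\sum_{j\in\Z}(e^{tA})_{i,j}$ is finite for each $t>0$. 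Thus the entire task is to bound one row sum of $e^{tA}$ uniformly in $i$.

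To do this I would split the row sum as $\sum_{j\in\Z}(e^{tA})_{i,j}=(e^{tA})_{i,i}+\sum_{l\ne 0}(e^{tA})_{i,i+l}$ and estimate the two pieces using Lemma \ref{uppbd}. The diagonal term is immediately controlled by \eqref{upp1}, uniformly in $i$. For the off-diagonal terms the key point to record is that although \eqref{upp2} is phrased for $(e^{tA})_{i+l,i}$, the bounds \eqref{ineq212}--\eqref{ineq213} on $(A^n)_{i,j}$ derived in the proof of Lemma \ref{uppbd} depend on the pair $(i,j)$ \emph{only through the distance} $|i-j|$; hence the very same right-hand side of \eqref{upp2}, which itself depends on $l$ only through $|l|$, is equally valid as an upper bound for $(e^{tA})_{i,i+l}$. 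This symmetry is exactly what lets me sum over the second index while quoting a bound originally organized by the first.

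It then remains to sum the $|l|$-dependent bound over $l\ne 0$ and check both convergence and uniformity. By nonnegativity I may interchange the sums over $l$ and $k$ (Tonelli), after which I count lattice points at a given distance: the constraint $k\ge\f{|l|-1}{2}$ in the $C_1$-term means $|l|\le 2k+1$, giving at most $4k+2$ nonzero values of $l$ for each $k$, while $k\ge\f{|l|}{2}$ in the $C_2$-term means $|l|\le 2k$, giving at most $4k$ values. Therefore
\begin{equation}
\sum_{l\ne 0}(e^{tA})_{i,i+l}\le C_1 t^2\sum_{k\ge 0}\f{t^{2k}}{k!}(\a\b)^k(4k+2)+C_2\sum_{k\ge 0}\f{t^{2k}}{k!}(\a\b)^k\,4k,
\end{equation}
and both series converge for every $t>0$ (they are finite linear combinations of $e^{\a\b t^2}$ and $\a\b t^2 e^{\a\b t^2}$), with a bound independent of $i$. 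Combining this with the diagonal estimate yields $S(t)<\infty$, which is what was needed. The main obstacle, though entirely manageable, is the bookkeeping in this last paragraph: one must make explicit that the estimates transfer to $(e^{tA})_{i,i+l}$ purely through the distance $|l|$ and then justify the interchange of summation together with the counting of the number of indices $l$ at each radius, so that the resulting bound is genuinely uniform in $i$ rather than merely finite for each $i$.
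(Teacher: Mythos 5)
Your proof is correct and follows essentially the same route as the paper's: reduce the lemma to a uniform-in-$i$ bound on the row sums $\sum_{j\in\Z}(e^{tA})_{i,j}$, apply Lemma \ref{uppbd}, interchange the sums over $l$ and $k$, and count the number of indices $l$ at each radius ($4k+2$, resp.\ $4k$) to obtain a finite bound independent of $i$. Your handling of the index orientation is in fact slightly more careful than the paper's, which passes from $\sum_{l}(e^{tA})_{i,i+l}$ to $\sum_{l}(e^{tA})_{i+l,i}$ as an equality (not literally valid since $e^{tA}$ is not symmetric when $\alpha\neq\beta$), whereas you correctly justify the transfer by observing that the estimates \eqref{ineq212}--\eqref{ineq213} depend on $(i,j)$ only through $|i-j|$, so the right-hand side of \eqref{upp2} bounds $(e^{tA})_{i,i+l}$ as well.
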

\begin{proof}
It suffices to show that $e^{tA}[\phi]\in \mc{O}$ for any $\phi\in \mc{O}$, that is, $\sum_{j\in \Z} (e^{tA})_{i,j}\phi(j)<+\infty$ uniformly in $i\in\Z$. Since $\phi\in \mc{O}$ is uniformly bounded, it remains to prove that there exists $C_3=C_3(t)>0$ (independent of $i\in\Z$) such that $\sum_{j\in \Z} (e^{tA})_{i,j}<C_3$. Indeed, by the upper bound obtained in Lemma \ref{uppbd} we can infer that
\begin{eqnarray}
\sum_{j\in\Z}(e^{tA})_{i,j}&& =\sum_{l\in\Z} (e^{tA})_{i,i+l}=  \sum_{l\in\Z} (e^{tA})_{i+l,i}\nonumber\\
&& \le 1+C_1t e^{\alpha\beta t^2}+C_2(e^{\alpha\beta t^2}-1)+ (C_2+C_1 t^2) \sum_{l\in\Z} \sum_{k\ge \f{|l|-1}{2}} \f{t^{2k}}{k!}(\a\b)^{k}\nonumber\\
&&= 1+C_1t e^{\alpha\beta t^2}+C_2(e^{\alpha\beta t^2}-1)+ (C_2+C_1 t^2) \sum_{k\ge 0} (4k+2) \f{t^{2k}}{k!}(\a\b)^{k}\nonumber\\
&& = 1+C_1t e^{\alpha\beta t^2}+C_2(e^{\alpha\beta t^2}-1)+ (C_2+C_1 t^2)(4\alpha\beta t^2+2)e^{\alpha\beta t^2}\nonumber\\
&&:=C_3(t).
\end{eqnarray}
Clearly, $C_3(t)$ is independent of $i\in\Z$. 
\end{proof}

\begin{lemma}\label{continuity}
There exists $C_4(t)$ such that 
\begin{equation}\label{c4}
\text{$C_4(t)$ is continuous and increasing in $t> 0$ with $\lim_{t\to 0} C_4(t)=1$}
\end{equation}
and 
\begin{equation}
\|e^{tA}\phi\|_\mc{O}\le C_4(t)\|\phi\|_{\mc{O}},\quad \forall \phi\in \mc{O}.
\end{equation}
\end{lemma}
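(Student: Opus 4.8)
The plan is to read off $C_4(t)$ as the supremum over $i$ of the row sums of the nonnegative matrix $e^{tA}$, computed \emph{exactly} rather than through the crude estimate of Lemma~\ref{otoo}. Interpreting $\|\cdot\|_{\mc O}$ as the sup norm $\|\phi\|_{\mc O}=\sup_{i\in\Z}|\phi(i)|$ and using $(e^{tA})_{i,j}>0$ from Lemma~\ref{uppbd}, I would first estimate, for each $i$,
\[
|e^{tA}[\phi](i)|\le \sum_{j\in\Z}(e^{tA})_{i,j}\,|\phi(j)|\le \|\phi\|_{\mc O}\sum_{j\in\Z}(e^{tA})_{i,j},
\]
so that it suffices to take $C_4(t):=\sup_{i\in\Z}\sum_{j\in\Z}(e^{tA})_{i,j}$ and verify the properties \eqref{c4}. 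The point is that the bound $C_3(t)$ of Lemma~\ref{otoo} is too lossy here: one checks $\lim_{t\to0}C_3(t)=1+2C_2\neq1$, so an exact evaluation of the row sums is genuinely needed to produce the limit $1$.

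To compute the row sums exactly I would set $R_n(i):=\sum_{j\in\Z}(A^n)_{i,j}$, a finite sum since $(A^n)_{i,j}=0$ for $|i-j|>n$. From $(A^n)_{i,j}=\sum_k a_{i,k}(A^{n-1})_{k,j}$ one gets the recursion $R_n(i)=\sum_k a_{i,k}R_{n-1}(k)$ with $R_0\equiv1$. By \eqref{def-matrix}, $R_n(i)$ depends only on the parity of $i$; writing $p_n,q_n$ for its values at even and odd $i$, the recursion decouples into $p_n=2\b q_{n-1}$ and $q_n=2\a p_{n-1}$, whence $p_{2k}=q_{2k}=(4\a\b)^k$, $p_{2k+1}=2\b(4\a\b)^k$, $q_{2k+1}=2\a(4\a\b)^k$. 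Since all terms are nonnegative, Tonelli justifies interchanging $\sum_j$ with $\sum_n$, and summing the two resulting series against $t^n/n!$ collapses them to hyperbolic functions: with $\o:=2\sqrt{\a\b}$,
\[
\sum_{j\in\Z}(e^{tA})_{i,j}=\cosh(\o t)+\sqrt{\tfrac{\b}{\a}}\,\sinh(\o t)\quad(i\text{ even}),\qquad \cosh(\o t)+\sqrt{\tfrac{\a}{\b}}\,\sinh(\o t)\quad(i\text{ odd}).
\]

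Taking the supremum over $i$ yields the explicit choice
\[
C_4(t):=\cosh(2\sqrt{\a\b}\,t)+\max\Big\{\sqrt{\tfrac{\b}{\a}},\ \sqrt{\tfrac{\a}{\b}}\Big\}\,\sinh(2\sqrt{\a\b}\,t),
\]
which is manifestly continuous and increasing in $t>0$ (both $\cosh$ and $\sinh$ are increasing and the coefficient is positive), and satisfies $\lim_{t\to0}C_4(t)=\cosh 0=1$, exactly as required by \eqref{c4}. The substantive step, and the only real obstacle, is this exact summation: one must recognize that the parity structure of $A$ reduces the row-sum recursion to a two-term linear system whose generating function is precisely $\cosh+\text{const}\cdot\sinh$, for it is exactly this cancellation that yields the value $1$ at $t=0$ that the cruder bound of Lemma~\ref{otoo} fails to capture. (Alternatively, the same row sums can be obtained by summing the explicit expression in Lemma~\ref{Anij} over $j$, using $\sum_j a_{k,j}\in\{2\a,2\b\}$ and $\sum_{m=0}^{n-1}\binom{n-1}{m}=2^{n-1}$, but the recursion is cleaner.) The sup-norm estimate and the verification of the three properties in \eqref{c4} are then routine.
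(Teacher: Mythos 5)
Your row-sum computation is correct and elegant, but your proof establishes the estimate in the wrong norm, and that is a genuine gap rather than a cosmetic one. The paper's $\|\cdot\|_{\mc{O}}$ is not the sup norm: $\mc{O}$ is declared a subspace of $\mc{C}$ inheriting its topology, so $\|\phi\|_{\mc{O}}=\sum_{k\ge 1}2^{-k}\max_{|i|\le k}|\phi(i)|$, the norm inducing the compact open topology; both the continuity assertion of Theorem \ref{fundamental} and the use of $C_4$ in the Gronwall estimate of Lemma \ref{semiflow} (which is run in $\|\cdot\|_{\mc{C}}$) require Lemma \ref{continuity} for this weighted norm. A sup-norm operator bound does not imply a bound in this norm, because $e^{tA}$ is nonlocal: $\max_{|i|\le k}\left|e^{tA}[\phi](i)\right|$ involves the values $\phi(j)$ with $|j|>k$, and the weighted norm controls those only through $\max_{|l|\le m}|\phi(l)|\le 2^{m}\|\phi\|_{\mc{O}}$, i.e.\ up to an exponentially growing factor. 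Concretely, the shift $(S_N\phi)(i):=\phi(i+N)$ has every row sum equal to $1$, yet testing it on the indicator function of $\{N\}$ shows that its operator norm with respect to $\|\cdot\|_{\mc{O}}$ is of order $2^{N}$; so "row sums exactly computed and tending to $1$" can never, by itself, deliver \eqref{c4} for $\|\cdot\|_{\mc{O}}$.

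What rescues $e^{tA}$ --- and what the paper's proof is actually about --- is the superexponential off-diagonal decay in \eqref{upp2} of Lemma \ref{uppbd}: $(e^{tA})_{i,i+l}$ is controlled by tails of the form $\sum_{k\ge |l|/2}t^{2k}(\alpha\beta)^k/k!$, whose factorial decay beats the factor $2^{|l|}$ permitted by the weights. The terms $I(j)$, $S_1$, $S_2$ and the reindexing/Fubini manipulations in the paper's proof exist precisely to play this decay against the weights $2^{-k}$; the diagonal term $I(0)$, estimated via \eqref{upp1}, is the only place where an analogue of your row-sum analysis enters, and it is the source of the constant $1$ in $\lim_{t\to 0}C_4(t)=1$, while the off-diagonal contributions vanish as $t\to 0$. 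Your exact identity $\sum_j(e^{tA})_{i,j}=\cosh(2\sqrt{\alpha\beta}\,t)+\max\left\{\sqrt{\beta/\alpha},\sqrt{\alpha/\beta}\right\}\sinh(2\sqrt{\alpha\beta}\,t)$ is a genuinely nice sharpening of the crude bound $C_3(t)$ of Lemma \ref{otoo} and would give a clean constant in a sup-norm setting, but as written your argument proves a different statement; to prove the lemma you must still estimate the tails $|j|>k$ against the weights using the entrywise decay, essentially as the paper does.
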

\begin{proof}
Clearly, $e^{tA}$ is linear. It then suffices to prove that the norm of $e^{tA}$ is bounded by some $C_4(t)>0$ with  $\lim_{t\downarrow 0} C_4(t)=1$. Indeed, by the definition of $\|\cdot \|_{\mc{O}}$ and Fatou's Lemma, we have
\begin{eqnarray}
\|e^{tA}[\phi]\|_{\mc{O}}&&=\sum_{k\ge 1}2^{-k}\max_{|i|\le k}|\sum_{j\in\Z} (e^{tA})_{i,i+j}\phi(i+j)|\nonumber\\
&&\le \sum_{k\ge 1}2^{-k}\sum_{j\in\Z} \max_{|i|\le k}\left\{(e^{tA})_{i,i+j}|\phi(i+j)|\right\}\nonumber\\
&&: =\sum_{k\ge 1}2^{-k}\sum_{j\in\Z} I(j).
\end{eqnarray}
To estimate the term $I(j)$, we introduce variable change $l=i+j$. Consequently, \begin{equation}
I(j)=\max_{|i|\le k}\left\{(e^{tA})_{i,i+j}|\phi(i+j)|\right\}=\max_{|l-j|\le k}\left\{(e^{tA})_{l-j,l}|\phi(l)|\right\}\le \max_{|l|\le k+|j|}\left\{(e^{tA})_{l-j,l}|\phi(l)|\right\},
\end{equation}
which, combining with Lemma \ref{uppbd}, implies that
\begin{equation}
I(0)\le \left[1+C_1t e^{\alpha\beta t^2}+C_2(e^{\alpha\beta t^2}-1)\right]\max_{|l|\le k} |\phi(l)|
\end{equation}
and 
\begin{equation}
I(j)\le \left[C_1 t^2 \sum_{m\ge \f{|j|-1}{2}} \f{t^{2m}}{m!}(\a\b)^{m}+C_2 \sum_{m\ge  \f{|j|}{2}} \f{t^{2m}}{m!}(\a\b)^{m} \right]  \max_{|l|\le k+|j|} |\phi(l)|,  \quad j\neq 0.
\end{equation}
Hence, 
\begin{eqnarray}
&&\sum_{k\ge 1}2^{-k}\sum_{j\in\Z\setminus\{0\}} I(j)\nonumber\\
 && \le 2C_1 t^2 \sum_{k\ge 1}2^{-k}\sum_{j\ge 1} \sum_{m\ge \f{j-1}{2}} \f{t^{2m}}{m!}(\a\b)^{m}\max_{|l|\le k+j} |\phi(l)|+ 2C_2\sum_{k\ge 1}2^{-k}\sum_{j\ge 1} \sum_{m\ge \f{j}{2}} \f{t^{2m}}{m!}(\a\b)^{m}\max_{|l|\le k+j} |\phi(l)|\nonumber\\
&&:=2C_1 t^2 S_1+2C_2 S_2.
\end{eqnarray}
We first derive an upper bound for $S_1$. It is then similar for $S_2$.  Introducing the variable change $\tilde{j}=k+j$ and dropping the tilde, we obtain
\begin{equation}
S_1= \sum_{k\ge 1}2^{-k}\sum_{j\ge k+1} \sum_{m\ge \f{j-k-1}{2}} \f{t^{2m}}{m!}(\a\b)^{m}\max_{|l|\le j} |\phi(l)|,
\end{equation}
for which we use Fubini's theorem to the order of sum, yielding
\begin{equation}
S_1=\sum_{j\ge 2}\sum_{k=1}^{j-1}2^{-k}\sum_{m\ge \f{j-k-1}{2}} \f{(\a\b t^2)^{m} }{m!} \max_{|l|\le j}|\phi(l)|.
\end{equation}
Introducing the variable change $\tilde{k}=j-k-1$ and dropping the tilde, we obtain
\begin{eqnarray}
S_1= \sum_{j\ge 2}\sum_{k=0}^{j-2} 2^{-(j-k-1)}\sum_{m\ge k/2} \f{(\a\b t^2)^{m} }{m!} \max_{|l|\le j}|\phi(l)| \le 2 \left(\sum_{k\ge 0} 2^k\sum_{m\ge k/2} \f{(\a\b t^2)^m }{m!}\right)     \|\phi\|_{\mc{O}}.
\end{eqnarray}
To estimate the term in the bracket, we exchange the sum order to obtain
\begin{equation}
\sum_{k\ge 0} 2^k\sum_{m\ge k/2} \f{(\a\b t^2)^{m} }{m!}= \sum_{m\ge 0}\sum_{k=0}^{2m}  2^{k}\f{(\a\b t^2)^{m} }{m!}\le 2 e^{4\a\b t^2},
\end{equation}
where we have used the inequality $\sum_{k=0}^{2m}  2^{k}\le 2^{2m+1}$. Therefore, 
\begin{equation}
S_1 \le  4 e^{4\a\b t^2}  \|\phi\|_{\mc{O}}.
\end{equation}
Similarly, $S_2\le 2 \left(\sum_{k\ge 1} 2^k\sum_{m\ge k/2} \f{(\a\b t^2)^m }{m!}\right)     \|\phi\|_{\mc{O}}$ and hence, 
\begin{equation}
S_2\le  4 \left(e^{4\a\b t^2} -1\right) \|\phi\|_{\mc{O}}.
\end{equation}
To conclude, 
\begin{eqnarray}
&& \|e^{tA}[\phi]\|_{\mc{O}}\nonumber\\ && =\sum_{k\ge 1}2^{-k}\sum_{j\in\Z} I(j)= \sum_{k\ge 1}2^{-k}\left[ I(0)+\sum_{j\in\Z\setminus\{0\}} I(j)\right]
\nonumber\\
&& \le   \left[1+C_1t e^{\alpha\beta t^2}+C_2(e^{\alpha\beta t^2}-1)\right] \|\phi\|_{\mc{O}}+ 2C_1 t^2 S_1+2C_2S_2\nonumber\\
&&\le   \left[1+(C_1t+8C_1 t^2) e^{\alpha\beta t^2}+9C_2(e^{\alpha\beta t^2}-1) \right] \|\phi\|_{\mc{O}}  \nonumber\\
&&:=C_4(t) \|\phi\|_{\mc{O}}, 
\end{eqnarray}
where 
\begin{equation}
C_4(t):=1+(C_1t+8C_1 t^2) e^{\alpha\beta t^2}+9C_2(e^{\alpha\beta t^2}-1).
\end{equation} 
Clearly,  $C_4(t)$ satisfies \eqref{c4}.
\end{proof}

{\it Proof of Theorem \ref{fundamental}.} From Lemmas \ref{otoo} and \ref{continuity} we see that $e^{tA}$ is a well-defined linear operator from $\mc{O}$ to $\mc{O}$ and $e^{tA}[\phi]$ is jointly continuous in $(t,\phi)$. It then remains to verify that $\f{d}{dt} \left(e^{tA}[\phi]\right)=A[e^{tA}\phi]$ for any $\phi\in\mc{O}$. Indeed, for any $T>0$, 
\begin{equation}\label{ineq0915}
\sum_{j\in\Z}\sum_{n\ge 0} \f{d}{dt}\left\{\f{t^n(A^n)_{i,j}}{n!}\phi(j)\right\}=\sum_{j\in\Z}\sum_{n\ge 0} \f{t^n(A^{n+1})_{i,j}}{n!}\phi(j)=\sum_{j\in\Z}(Ae^{tA})_{i,j}\phi(j)=Ae^{tA}[\phi](i)
\end{equation}
uniformly in $t\in [0,T]$ and $i\in\Z$. Then by Fubini's theorem we may exchange the order of derivative and limits to obtain
\begin{equation}
 \f{d}{dt} \left\{\sum_{j\in\Z}\sum_{n\ge 0}\f{t^n(A^n)_{i,j}}{n!}\phi(j)\right\}=\sum_{j\in\Z}\sum_{n\ge 0} \f{d}{dt}\left\{\f{t^n(A^n)_{i,j}}{n!}\phi(j)\right\},
\end{equation}
which, combining with \eqref{ineq0915} implies that
\begin{equation}
\f{d}{dt} \{e^{tA}[\phi](i)\}=Ae^{tA}[\phi](i)=A[e^{tA}[\phi]](i), \quad \forall i\in\Z, t>0.
\end{equation} 
Therefore, $e^{tA}$, as defined in \eqref{def-etA},  is the fundamental solution matrix of $U'=AU$.

\setcounter{equation}{0}
\section{Proof of Theorem \ref{sstw}}

We apply the dynamical system theory developed in \cite{LZ-JFA10} to prove Theorem \ref{sstw}. For this purpose, we first show that \eqref{abstract-eq} generates a solution semiflow fitting the framework in \cite[Theorems 5.2 and 5.3]{LZ-JFA10} and \cite[Theorem 3.10]{LZ-CPAM07}.

By Theorem \ref{fundamental}, we can rewrite \eqref{abstract-eq} as the following integral equation. 
\begin{equation}\label{integral-eq}
\left\{
\begin{array}{ll}
U(t)= e^{(A+B)t}U(0)+\int_{0}^{t}e^{(A+B)(t-s)}F(U(s-\tau))ds, & t>0,\\
U(t)=\phi(t), &t \in [-\tau, 0],
\end{array}
\right.
\end{equation}
where $\phi\in \mc{C}$ is the given initial value\footnote{Note that $\mc{C}=C(\Z,X)$ and $X=C([-\tau,0],\R)$ as defined in section 3. Here by $\phi(t)$ we mean that $\phi(\cdot)(t)$.}. In \eqref{integral-eq} one may directly solve $U(t)$ for $t\in(0,\tau]$, and inductively for $t\in (n\tau,(n+1)\tau], n\ge 0$. For $t\ge 0$, define $Q_t: \mc{C}\to\mc{C}$ by 
\begin{equation}\label{def-flow}
Q_t[\phi](\theta):=U(t+\theta).
\end{equation}
For integers $a, b$ with $a<b$, we use the interval $[a,b]_{\Z}$ to denote the set of all integers between $a$ and $b$ (including $a$ and $b$). For $\phi\in\mc{C}$, we define $\phi_{[a,b]_\Z} \in C([a,b]_Z, X)$ by $\phi_{[a,b]_\Z}(i)=\phi(i), i\in[a,b]_\Z$. For any bounded set $\mc{U}\subset \mc{C}$, we define $\mc{U}_{[a,b]_\Z}:=\{\phi_{[a,b]_\Z}: \phi\in \mc{U}\}$.  Let $\kappa(\mc{U}_{[a,b]_\Z})$ be the Kuratowski mesasure of non-compactness for $\mc{U}_{[a,b]_\Z}$.
\begin{lemma}\label{semiflow}
$\{Q_t\}_{t\ge 0}$ is a monotone semiflow on $\mc{C}_{U^*}$ with the following properties: 
\begin{enumerate}
\item[(i)] $0$ and $U^*$ are the only $2$-periodic equilibria in $\mc{C}_{U^*}$. 
\item[(ii)] $Q_t[\phi](i+2)=Q_t[\phi(\cdot+2)](i)$ for  any $t\ge 0, i\in\Z, \phi\in \mc{C}_{U^*}$.
\item[(iii)] For any $I:=[0,a]_\Z$ with $a\in\Z_+$, there exists $\gamma(t)\in(0,1)$ such that 
\begin{equation}
\kappa (Q_t[\mc{U}])_I\le \gamma  \kappa (\mc{U}_I),\quad \forall   \mc{U}\subset \mc{C}_{U^*}.
\end{equation}
\end{enumerate}
\end{lemma}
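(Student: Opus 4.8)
The plan is to make $\{Q_t\}$ rigorous and monotone first, then verify (i)--(iii) in turn, treating the Kuratowski estimate (iii) as the genuine difficulty. I would realize $\{Q_t\}$ by solving \eqref{integral-eq} by the method of steps: on $[0,\tau]$ the delayed term $F(U(s-\tau))=F(\phi(s-\tau))$ is a known, bounded, continuous $\mc{O}$-valued function, so \eqref{integral-eq} is a linear inhomogeneous equation solved explicitly through $e^{(A+B)t}$. Since $B$ is a bounded diagonal perturbation of $A$, the operator $e^{(A+B)t}$ inherits from Theorem \ref{fundamental} and Lemmas \ref{otoo}--\ref{continuity} the properties of being linear, strongly positive, bounded and jointly continuous. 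Iterating on $[n\tau,(n+1)\tau]$ yields a unique global solution, and the semigroup property of $e^{(A+B)t}$ together with uniqueness gives $Q_{t+s}=Q_t\circ Q_s$. Monotonicity then follows from the variation-of-constants form: if $\phi\ge\psi$, strong positivity of $e^{(A+B)t}$ handles $e^{(A+B)t}(\phi(0)-\psi(0))$, while $f'\ge0$ from \eqref{f2} makes $F$ nondecreasing on $[0,U^*]$, so $F(U_\phi(s-\tau))\ge F(U_\psi(s-\tau))$ and positivity again closes the induction over steps. Finally, since $0$ and $U^*$ are equilibria, monotonicity sandwiches $0=Q_t[0]\le Q_t[\phi]\le Q_t[U^*]=U^*$, so $\mc{C}_{U^*}$ is positively invariant.

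For (i), a $2$-periodic equilibrium is determined by its two values $u_e$ (even sites) and $u_o$ (odd sites), and setting $U'\equiv0$ in \eqref{model} reduces the infinite system to $2\beta u_e-(2\alpha+\eta)u_o=0$ together with $2\alpha u_o-(2\beta+\gamma)u_e+f(u_e)=0$. Eliminating $u_o=\frac{2\beta u_e}{2\alpha+\eta}$ and using the definition of $\Gamma$ collapses the second equation to $f(u_e)-\Gamma u_e=0$, whose only roots by \eqref{f1} are $u_e=0$ and $u_e=w^*$; these give exactly $0$ and $U^*$. For (ii), let $T$ denote the shift $(T\phi)(i)=\phi(i+2)$. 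Because all coefficients in \eqref{def-matrix} depend only on the parity of $i$, one checks directly that $a_{i+2,j+2}=a_{i,j}$, $b_{i+2,j+2}=b_{i,j}$, and that $F$ acts only on even sites, so $A$, $B$ and $F$ all commute with $T$, and hence so does $e^{(A+B)t}$. Applying $T$ to \eqref{integral-eq} shows $TU_\phi$ solves the same equation with datum $T\phi$, so uniqueness gives $U_\phi(t)(i+2)=U_{\phi(\cdot+2)}(t)(i)$, which is (ii) after unravelling \eqref{def-flow}.

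For (iii), when $t\ge\tau$ and $\theta\in[-\tau,0]$ we have $t+\theta\ge0$, so \eqref{integral-eq} yields the splitting $Q_t[\phi](\theta)=L_t[\phi](\theta)+N_t[\phi](\theta)$ with $L_t[\phi](\theta)=e^{(A+B)(t+\theta)}\phi(0)$ and $N_t[\phi](\theta)=\int_0^{t+\theta}e^{(A+B)(t+\theta-s)}F(U_\phi(s-\tau))\,ds$. For the nonlinear part, boundedness of $F$ on $\mc{C}_{U^*}$ and the operator bounds of Lemma \ref{continuity} show that, for each fixed $i$, the family $\{\theta\mapsto N_t[\phi](\theta)(i):\phi\in\mc{U}\}$ is uniformly bounded and equi-Lipschitz in $\theta$ (differentiate under the integral, using that $A+B$ is a bounded operator on $\mc{O}$); Arzel\`a--Ascoli makes it precompact in $X$, and finiteness of $I$ gives $\kappa(N_t[\mc{U}]_I)=0$. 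The part $L_t$ depends only on the slice $\phi(0)$, but through the infinite coupling $\sum_{j}(e^{(A+B)(t+\theta)})_{i,j}\phi(0)(j)$; I would truncate the sum to $|j|\le M$. The truncation factors through the finitely many bounded reals $\{\phi(0)(j)\}_{|j|\le M}$, hence has precompact image, while the tail is dominated by $(\sup_i u_i^*)\cdot\sup_{i\in I,\theta}\sum_{|j|>M}(e^{(A+B)(t+\theta)})_{i,j}$, which tends to $0$ as $M\to\infty$ by the super-exponential off-diagonal decay of Lemma \ref{uppbd} (inherited by $e^{(A+B)t}$ since $B$ is bounded and diagonal). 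Letting $M\to\infty$ gives $\kappa(L_t[\mc{U}]_I)=0$, whence $\kappa(Q_t[\mc{U}]_I)=0\le\gamma\,\kappa(\mc{U}_I)$ for any $\gamma\in(0,1)$.

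The hard part is precisely (iii), and the main obstacle is the nonlocality of $e^{(A+B)t}$: although $\kappa$ is measured on a finite window $I$, the value of $Q_t[\phi]$ on $I$ depends on $\phi$ over all of $\Z$, so one cannot simply invoke finite-dimensionality; it is the super-exponential spatial decay of the fundamental solution (Lemma \ref{uppbd}) together with the uniform a priori bound furnished by $\mc{C}_{U^*}$ that rescues the truncation. A secondary subtlety is that the splitting requires $t\ge\tau$: for $0<t<\tau$ the window $[-\tau,0]$ still carries an unsmoothed shifted copy of the initial history, so the strict contraction is only available once $t\ge\tau$, which is the regime used when feeding $\{Q_t\}$ into the theory of \cite{LZ-JFA10, LZ-CPAM07}.
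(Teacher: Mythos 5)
Your construction of the semiflow, the monotonicity argument, and parts (i)--(ii) are correct (the paper omits these as routine), and for $t\ge\tau$ your linear-plus-Duhamel splitting, with the tail of the infinite sum controlled by the decay in Lemma \ref{uppbd}, does give $\kappa\bigl((Q_t[\mc{U}])_I\bigr)=0$, which agrees with the paper in that regime. The genuine gap is the range $0<t<\tau$, which you explicitly discard. Statement (iii) is asserted for each $t>0$ --- that is what $\gamma(t)\in(0,1)$ means --- and the small-$t$ case is not dispensable for the application: the proof of Theorem \ref{sstw} feeds Lemma \ref{semiflow} into \cite[Theorems 5.2 and 5.3]{LZ-JFA10}, whose hypotheses require the compactness-type condition for the time-$t$ map for \emph{every} $t>0$. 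In particular, the existence of pulsating waves for a continuous-time semiflow is obtained there (following \cite{LZ-CPAM07}) by constructing wave profiles as fixed points for the maps $Q_{2^{-n}}$ with shift $c\,2^{-n}$ and passing to the limit $n\to\infty$; this invokes the $\kappa$-contraction property at arbitrarily small times $t=2^{-n}<\tau$, exactly the regime you abandon. So your closing claim that $t\ge\tau$ ``is the regime used when feeding $\{Q_t\}$ into the theory'' is false, and your proof does not establish the lemma as stated or as used.

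The paper closes the small-time case with a different decomposition, borrowed from \cite[Section 4]{LZ-JFA10}: $Q_t[\phi]=L(t)[\phi]+S(t)[\phi]$, where $L(t)[\phi](\theta)=\phi(t+\theta)-\phi(0)$ for $t+\theta<0$ and $=0$ for $t+\theta\ge 0$ (the portion of the initial history not yet overwritten by the solution), while $S(t)[\phi](\theta)$ equals $\phi(0)$ for $t+\theta<0$ and the solution $U(t+\theta)$ for $t+\theta\ge0$. On a finite window $I$ the set $(S(t)[\mc{U}])_I$ is precompact (bounded, equicontinuous in $\theta$, finitely many sites), and the pure-shift structure of $L(t)$ yields $\kappa\bigl((L(t)[\mc{U}])_I\bigr)\le e^{-\delta t}\,\kappa(\mc{U}_I)$ with respect to a suitable exponentially weighted equivalent norm on $C([-\tau,0],\R)$; note that the naive sup-norm Lipschitz constant of $\phi\mapsto\phi(t+\cdot)-\phi(0)$ is $2$, so this weighting is precisely where the contraction factor $\gamma(t)=e^{-\delta t}<1$ comes from, and it works for all $t>0$, collapsing to your statement when $t\ge\tau$ since then $L(t)\equiv0$. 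To repair your proof you must either reproduce this (or an equivalent) small-time argument, or prove --- which you cannot simply cite --- that the conclusions of \cite[Theorems 5.2, 5.3]{LZ-JFA10} persist when the contraction property is only known for $t\ge\tau$.
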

\begin{proof}
We first prove that $Q_t[\phi]$ is continuous in $(t,\phi)$. By \eqref{integral-eq} and \eqref{def-flow}, we have
\begin{equation}\label{integral-abeq}
Q_t[\phi](\theta)=
\begin{cases}
e^{(A+B)(t+\theta)}\phi(0)+\int_{0}^{t+\theta}e^{(A+B)(t+\theta-s)}F(Q_s[\phi](-\tau))ds, & t+\theta>0,\\
\phi(t+\theta),&t+\theta\le 0.
\end{cases}
\end{equation}
For any $T>0$, by Lemma \ref{continuity} we can take the norm in both sides of \eqref{integral-abeq} to obtain
\begin{equation}
e^{\delta t}\|Q_t[\phi]\|_\mc{C}\le \max\{C_4(T),e^{-\delta T}\}\|\phi\|_{\mc{C}}+p\int_0^t  C_4(t-s) e^{\delta s} \|Q_s[\phi]\|_{\mc{C}}ds, \quad \forall t\in(0,T],
\end{equation}
where $\delta:=\max\{2\alpha+\eta, 2\beta+\gamma\}$ and $C_4(t)$ is as in \eqref{c4}. By using the Gronwall inequality we obtain
\begin{equation}
\|Q_t[\phi]\|_\mc{C}\le \max\{C_4(T),e^{-\delta T}\} e^{-\delta t+\int_0^t pC_4(s)ds}\|\phi\|_{\mc{C}},\quad t\in(0,T].
\end{equation}
Further, by the triangle inequality we can obtain the continuity of $Q_t[\phi]$ in $(t,\phi)\in\R_+\times \mc{C}_{U^*}$. 

Next we prove (iii).  We employ the same idea as in \cite[section 4]{LZ-JFA10}. Define 
\begin{equation}
L(t)[\phi](\theta)=
\begin{cases}
\phi(t+\theta)-\phi(0), &t+\theta<0\\
0,&t+\theta\ge 0
\end{cases}
\quad 
S(t)[\phi](\theta)=
\begin{cases}
\phi(0), &t+\theta<0\\
U(t+\theta),&t+\theta\ge 0.
\end{cases}
\end{equation}
Then $Q_t[\phi]=L(t)[\phi]+S(t)[\phi]$. By the same argument as in  \cite[section 4]{LZ-JFA10} we see that for any $\mc{U}\in \mc{C}_{U^*}$, the set $(S(t)[\mc{U}])_I$ is compact in $C(I, X)$ and $\kappa(L(t)[\mc{U}])_I\le e^{-\delta t} \kappa (\mc{U}_I)$ for some $\delta>0$. Thus, 
\begin{equation}
\kappa((Q_t[\mc{U}])_{I})\le \kappa((L(t)[\mc{U}])_{I})+\kappa((S(t)[\mc{U}])_{I})= \kappa((L(t)[\mc{U}])_{I})\le e^{-\delta t} \mc{U}_I. 
\end{equation}
Thus, statement (iii) is proved. 

Other statements are obvious and we omit the details. 
\end{proof}

{\bf Proof of Theorem \ref{sstw}:} In \cite[Theorems 5.2 and 5.3]{LZ-JFA10} we choose $\mc{H}=\Z, \tilde{H}=2\Z, \beta=U^*, \mc{M}=Y=\mc{C}_{U^*}$.
Then by Lemma \ref{semiflow} we see that the solution semiflow $\{Q_t\}_{t\ge 0}$ satisfies all conditions there. Further, by the symmetry and the sublinearity of the semiflow we obtain the existence of the leftward and rightward spreading speed $c^*\ge 0$ that coincides with the minimal speed of pulsating waves,  in the sense of  \cite[Theorems 5.2 and 5.3]{LZ-JFA10}. 

Next we give the variational characterization of $c^*$. Indeed, by using \cite[Theorem 3.10]{LZ-CPAM07}, we can infer that $c^*=\inf_{\mu>0}\f{\lambda(\mu)}{\mu}$, where $\lambda(\mu)$ is the principle eigenvalue of the following problem
\begin{equation}\label{eigen1}
\begin{cases}
\lambda e^{\mu \cdot}\phi= (A+B+e^{-\lambda\tau}DF(0))[e^{\mu\cdot}\phi]\\
\phi(i)=\phi(i+2)>0, i\in\Z.
\end{cases}
\end{equation}
In virtue of the properties of operators $A, B$ and $DF(0)$, we compute to have an equivalent eigenvalue problem
\begin{equation}\label{eigen2}
\begin{cases}
\lambda \phi(0)=\alpha (e^\mu+e^{-\mu})\phi(1)+(-2\beta-\gamma+f'(0)e^{-\lambda\tau})\phi(0)\\
\lambda \phi(1)=\beta (e^\mu+e^{-\mu})\phi(0)+(-2\alpha+\eta)\phi(1).
\end{cases}
\end{equation}
Since $\phi(i)>0, i\in\Z$ as assumed, we further simplified \eqref{eigen2} by solving its second equation, yielding that $\lambda=\lambda(\mu)$ is the unique positive solution of
\begin{equation}\label{eigen3}
F(\lambda,\mu,\beta)=0,
\end{equation}
where 
\begin{equation}
F(\lambda,\mu,\beta):=-(\lambda+2\beta+\gamma)+\f{\alpha\beta(e^{\mu}+e^{-\mu})^2}{\lambda+2\alpha+\eta}+f'(0)e^{-\lambda\tau}.
\end{equation}

Finally we show that $c^*>0$. Indeed, since $\lim_{\mu\downarrow 0}\lambda(\mu)=\lambda(0)>0$ is the unique positive solution of $F(\lambda, 0,\beta)$ and $\lambda(\mu,\beta)\sim \alpha\beta e^\mu$ as $\mu\to\infty$, we can infer that $c^*=\inf_{\mu>0}\f{\lambda(\mu,\beta)}{\mu}$ is attained at some $\mu^*=\mu^*>0$. Therefore, $c^*>0$. The proof is complete.

\setcounter{equation}{0}
\section{Proof of Theorems \ref{ss1} and \ref{ss2}}
In the previous section, we have established $c^*=\inf_{\mu>0}\f{\lambda(\mu)}{\mu}$, where $\lambda(\mu)$ is the unique positive zero of $F(\lambda,\mu,\beta)$. In this section, to investigate the influence of $\beta$ we write $c^*(\beta)$ and  $\lambda(\mu,\beta)$ instead of $c^*$ and $\lambda(\mu)$, respectively.

Before proving Theorem \ref{ss1}, we first ensure that the maximum of $c^*=c^*(\beta)$ exists when $\beta$ varies in $(0,\beta_0)$.

\begin{lemma}\label{limit1}
$\lim_{\beta\downarrow 0}c^*(\beta)=0$.
\end{lemma}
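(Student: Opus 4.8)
The goal is to show that the spreading speed $c^*(\beta) = \inf_{\mu>0}\frac{\lambda(\mu,\beta)}{\mu}$ tends to $0$ as the dispersal rate $\beta$ into bad locations vanishes. Biologically this is natural: when $\beta\downarrow 0$ the good locations $2\Z$ become disconnected from one another (they only communicate through the bad sites), so the species cannot propagate and the speed must collapse. The plan is to exploit the explicit characterization $\lambda(\mu,\beta)$ as the unique positive root of
\begin{equation}\label{Fzero-plan}
F(\lambda,\mu,\beta)=-(\lambda+2\beta+\gamma)+\frac{\alpha\beta(e^{\mu}+e^{-\mu})^2}{\lambda+2\alpha+\eta}+f'(0)e^{-\lambda\tau}=0.
\end{equation}
Since $c^*(\beta)\ge 0$ always holds (Theorem \ref{sstw}), it suffices to produce, for each fixed target $\epsilon>0$, a single value $\mu=\mu(\epsilon)$ and a threshold $\beta_\epsilon$ so that $\frac{\lambda(\mu(\epsilon),\beta)}{\mu(\epsilon)}<\epsilon$ for all $\beta<\beta_\epsilon$; then $c^*(\beta)\le \frac{\lambda(\mu(\epsilon),\beta)}{\mu(\epsilon)}<\epsilon$ by the infimum.

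First I would record the monotone dependence of the root $\lambda$ on $\beta$. For fixed $\mu$, $F(\lambda,\mu,\beta)$ is strictly decreasing in $\lambda$ (the $-\lambda$ term dominates and the fraction decreases in $\lambda$), which is exactly what makes the positive root $\lambda(\mu,\beta)$ unique. As $\beta\downarrow 0$, equation \eqref{Fzero-plan} degenerates to $-(\lambda+\gamma)+f'(0)e^{-\lambda\tau}=0$, whose unique positive root I shall call $\lambda_0$ (it is positive precisely because $f'(0)>\gamma$, which follows from $\beta<\beta_0$). Thus for each fixed $\mu$ one expects $\lambda(\mu,\beta)\to\lambda_0$ as $\beta\downarrow 0$, a finite limit \emph{independent of $\mu$}. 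This is the crux: because the coupling term carrying the $\mu$-dependence is proportional to $\beta$, it switches off in the limit, so $\lambda(\mu,\beta)$ stays uniformly bounded as $\beta\downarrow 0$ no matter how large $\mu$ is. I would make the convergence $\lambda(\mu,\beta)\to\lambda_0$ rigorous either by the implicit function theorem applied to $F$ (note $\partial F/\partial\lambda\ne 0$ at the root) or, more cheaply, by a direct comparison: for any fixed $\mu$, choosing $\beta$ small makes the middle term of \eqref{Fzero-plan} arbitrarily small, forcing the root $\lambda(\mu,\beta)$ into any prescribed neighbourhood of $\lambda_0$.

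With that boundedness in hand the argument closes quickly. Given $\epsilon>0$, pick $\mu(\epsilon)$ so large that $\frac{\lambda_0+1}{\mu(\epsilon)}<\epsilon$. Since $\lambda(\mu(\epsilon),\beta)\to\lambda_0$ as $\beta\downarrow 0$, there is $\beta_\epsilon>0$ with $\lambda(\mu(\epsilon),\beta)<\lambda_0+1$ for all $\beta\in(0,\beta_\epsilon)$, whence
\[
c^*(\beta)\le \frac{\lambda(\mu(\epsilon),\beta)}{\mu(\epsilon)}<\frac{\lambda_0+1}{\mu(\epsilon)}<\epsilon,\qquad \beta\in(0,\beta_\epsilon),
\]
and since $\epsilon$ is arbitrary and $c^*(\beta)\ge 0$, the limit $\lim_{\beta\downarrow 0}c^*(\beta)=0$ follows. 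The main obstacle, and the only step requiring care, is establishing that $\lambda(\mu,\beta)$ remains bounded \emph{uniformly as $\beta\downarrow 0$} for the fixed (large) $\mu$ we have selected — one must make sure the freely chosen $\mu$ is fixed \emph{before} sending $\beta$ to $0$, so that the large factor $(e^{\mu}+e^{-\mu})^2$ is a harmless constant multiplied by the vanishing $\beta$. A sloppy interchange of the two limits would be fatal, since $\lambda(\mu,\beta)\sim\alpha\beta e^{\mu}$ blows up if $\mu\to\infty$ at fixed $\beta$; keeping the order of limits straight is therefore the heart of the proof.
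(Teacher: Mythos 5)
Your proof is correct, but it is not the paper's argument — and the difference is substantive. Both approaches start from the characterization of $\lambda(\mu,\beta)$ as the unique positive root of $F(\lambda,\mu,\beta)=0$ in \eqref{eigen3} and bound $c^*(\beta)$ by $\lambda(\mu,\beta)/\mu$ at a single test frequency $\mu$, but the mechanisms are opposite. The paper deduces from \eqref{ineq301} that there exists $\mu_0>0$, \emph{independent of} $\beta$, such that $\lambda(\mu,\beta)<2\sqrt{\alpha\beta}\,(e^{\mu}+e^{-\mu})$ for all $\mu\ge\mu_0$ and $\beta\in(0,\beta_0)$; it then fixes $\mu=\mu_0$ and lets the factor $\sqrt{\beta}$ annihilate the numerator. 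You instead keep the numerator bounded — at each fixed $\mu$ the root converges, as $\beta\downarrow 0$, to the $\mu$-independent limit $\lambda_0>0$ solving $-(\lambda+\gamma)+f'(0)e^{-\lambda\tau}=0$ — and you make the quotient small by choosing the denominator $\mu$ large \emph{before} sending $\beta\downarrow 0$. Your two-step $\epsilon$-argument yields no explicit rate, but it is airtight: the comparison-of-signs argument for $F(\lambda_0\pm\delta,\mu,\beta)$ at fixed $\mu$ is exactly what is needed, since $f'(0)>\Gamma>\gamma$ guarantees $\lambda_0>0$.

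Beyond elegance, your route exposes a genuine error in the published proof. Your key observation, $\lambda(\mu_0,\beta)\to\lambda_0>0$ as $\beta\downarrow 0$ at fixed $\mu_0$, directly contradicts the paper's intermediate claim: the claimed majorant $2\sqrt{\alpha\beta}\,(e^{\mu_0}+e^{-\mu_0})$ tends to $0$ along the same limit, so the bound cannot hold with $\mu_0$ independent of $\beta$. It can only hold for $\mu\ge\mu_0(\beta)$ with $\mu_0(\beta)\approx\ln\bigl(C/\sqrt{\alpha\beta}\,\bigr)\to\infty$; with that correction the paper's scheme still proves the lemma, but the resulting estimate is $c^*(\beta)=O\bigl(1/\ln(1/\beta)\bigr)$ rather than $O(\sqrt{\beta})$. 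Optimizing $\lambda(\mu,\beta)/\mu$ over $\mu$ shows this logarithmic decay is in fact the true asymptotic behavior, so a bound vanishing like $\sqrt{\beta}$ at a fixed $\mu_0$ was impossible from the start. Your closing warning about not interchanging the limits $\mu\to\infty$ and $\beta\downarrow 0$ is precisely the point at which the published argument slips.
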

\begin{proof}
By \eqref{eigen3} we infer that
\begin{equation}\label{ineq301}
(\lambda+2\alpha+\eta)(\lambda+2\beta+\gamma-f'(0))<\alpha\beta(e^{\mu}+e^{-\mu})^2.
\end{equation}
Note that $\beta\in(0,\beta_0)$. It then follows that there exists $\mu_0>0$ (independent of $\beta$) such that 
\begin{equation}
\lambda=\lambda(\mu,\beta)<2\sqrt{\alpha\beta}(e^{\mu}+e^{-\mu}),\quad \forall \mu\ge \mu_0, \beta\in(0,\beta_0).
\end{equation} 
Hence, 
\begin{equation}
c^*(\beta)\le 2\sqrt{\alpha\beta}\f{e^{\mu_0}+e^{-\mu_0}}{\mu_0},
\end{equation}
which implies that $\lim_{\beta\downarrow 0}  c^*(\beta)=0$.
\end{proof}

\begin{lemma}\label{limit2}
$\lim_{\beta\uparrow \beta_0}c^*(\beta)=0$.
\end{lemma}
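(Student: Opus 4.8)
The plan is to produce an explicit $\beta$-dependent upper bound for $\lambda(\mu,\beta)$ and then optimize in $\mu$; since $c^*(\beta)\ge 0$ always holds, it suffices to bound $c^*(\beta)$ from above by a quantity vanishing as $\beta\uparrow\beta_0$. Write $\epsilon(\beta):=f'(0)-\Gamma(\beta)$. Recall that $f'(0)>\Gamma$ is equivalent to $\beta<\beta_0$, so $\epsilon(\beta)>0$ on $(0,\beta_0)$ and $\epsilon(\beta)\downarrow 0$ as $\beta\uparrow\beta_0$; a short computation with the definition of $\Gamma$ shows moreover that $F(0,0,\beta)=\epsilon(\beta)$, which is the exact link between the extinction threshold and the characteristic function.

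First I would record two elementary facts about $F(\lambda,\mu,\beta)=-(\lambda+2\beta+\gamma)+\f{\alpha\beta(e^{\mu}+e^{-\mu})^2}{\lambda+2\alpha+\eta}+f'(0)e^{-\lambda\tau}$. It is strictly decreasing in $\lambda$ with $\partial_\lambda F(\cdot,0,\beta)\le -1$ on $\lambda\ge 0$, so that any $\Lambda\ge 0$ with $F(\Lambda,\mu,\beta)\le 0$ forces $\lambda(\mu,\beta)\le\Lambda$; and the $\mu$-dependence separates cleanly via $(e^{\mu}+e^{-\mu})^2-4=4\sinh^2\mu$, giving $F(\lambda,\mu,\beta)=F(\lambda,0,\beta)+\f{4\alpha\beta\sinh^2\mu}{\lambda+2\alpha+\eta}$. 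Combining $F(\Lambda,0,\beta)\le F(0,0,\beta)-\Lambda=\epsilon(\beta)-\Lambda$ (from $\partial_\lambda F\le -1$) with the crude bound $\f{4\alpha\beta\sinh^2\mu}{\Lambda+2\alpha+\eta}\le \f{4\alpha\beta\sinh^2\mu}{2\alpha+\eta}$, I would check that $\Lambda_\beta(\mu):=\epsilon(\beta)+\f{4\alpha\beta}{2\alpha+\eta}\sinh^2\mu$ makes $F(\Lambda_\beta(\mu),\mu,\beta)\le 0$, hence $\lambda(\mu,\beta)\le\Lambda_\beta(\mu)$ for every $\mu>0$.

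With this super-solution in hand the conclusion is immediate: $c^*(\beta)=\inf_{\mu>0}\f{\lambda(\mu,\beta)}{\mu}\le\inf_{\mu>0}\Big[\f{\epsilon(\beta)}{\mu}+\f{4\alpha\beta}{2\alpha+\eta}\cdot\f{\sinh^2\mu}{\mu}\Big]$. Evaluating the right-hand side at the test point $\mu_\beta:=\sqrt{\epsilon(\beta)}\to 0$ gives $\f{\epsilon(\beta)}{\mu_\beta}=\sqrt{\epsilon(\beta)}$ and $\f{\sinh^2\mu_\beta}{\mu_\beta}\sim\mu_\beta=\sqrt{\epsilon(\beta)}$, while $\beta$ stays bounded as $\beta\uparrow\beta_0$; both terms therefore vanish, and since $c^*(\beta)\ge 0$ we conclude $\lim_{\beta\uparrow\beta_0}c^*(\beta)=0$.

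I expect no genuine obstacle here: the mechanism is that as $\beta\uparrow\beta_0$ the linear characteristic equation degenerates at $\mu=0$ (the root $\lambda(0,\beta)\to 0$), and the only care needed is to let $\mu=\mu_\beta\to 0$ at a rate slow enough that $\epsilon(\beta)/\mu_\beta\to 0$ yet fast enough that the $\sinh^2\mu_\beta/\mu_\beta$ term also decays — the choice $\mu_\beta=\sqrt{\epsilon(\beta)}$ balances these and in fact yields the sharper asymptotics $c^*(\beta)=O\big(\sqrt{\epsilon(\beta)}\big)$, though only the qualitative limit is required. The two mild points worth double-checking are the identity $F(0,0,\beta)=\epsilon(\beta)$ and the bound $\partial_\lambda F\le -1$, which together are exactly what makes the explicit super-solution $\Lambda_\beta(\mu)$ valid.
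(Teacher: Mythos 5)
Your proof is correct, and while it implements the same overall strategy as the paper --- bound $\lambda(\mu,\beta)$ above by something of the form $\epsilon(\beta)+C\mu^2$ with $\epsilon(\beta)\downarrow 0$ as $\beta\uparrow\beta_0$, then test the variational formula at $\mu\sim\sqrt{\epsilon(\beta)}$ --- the way you obtain that bound is genuinely different and cleaner. The paper first discards the delay factor via $e^{-\lambda\tau}\le 1$ to reach the quadratic inequality $(\lambda+2\alpha+\eta)(\lambda+2\beta+\gamma-f'(0))<\alpha\beta(e^{\mu}+e^{-\mu})^2$, invokes the small-$\mu$ Taylor bound $(e^{\mu}+e^{-\mu})^2<4+5\mu^2$, and then estimates the positive root of the quadratic by $-c/b$; this forces the test point to live in a small-$\mu$ window and requires the root-formula algebra. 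You instead keep the full characteristic function, split off the $\mu$-dependence exactly via $(e^{\mu}+e^{-\mu})^2=4+4\sinh^2\mu$, observe the identity $F(0,0,\beta)=f'(0)-\Gamma(\beta)=\epsilon(\beta)$ (which is precisely the degeneracy driving the lemma), and use $\partial_\lambda F\le -1$ to certify the explicit super-solution $\Lambda_\beta(\mu)=\epsilon(\beta)+\frac{4\alpha\beta}{2\alpha+\eta}\sinh^2\mu$, valid for \emph{all} $\mu>0$. Both routes give the same rate $c^*(\beta)=O\bigl(\sqrt{\beta_0-\beta}\bigr)$ (note $\epsilon(\beta)=\frac{2\eta(\beta_0-\beta)}{2\alpha+\eta}$, so your bound and the paper's are quantitatively comparable); what yours buys is the elimination of the quadratic-formula computation and the small-$\mu$ restriction, plus a transparent identification of the mechanism, namely that the characteristic root at $\mu=0$ collapses linearly in $\beta_0-\beta$. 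What the paper's version buys is a bound expressed directly in the original model parameters without introducing $\Gamma$, which matches the notation used in the subsequent proof of Theorem \ref{ss1}.
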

\begin{proof}
For small $\mu$, $(e^\mu+e^{-\mu})^2=4+4\mu^2+o(\mu)$. Then there exists $\mu_0>0$ such that 
\begin{equation}
(e^\mu+e^{-\mu})^2<4+5\mu^2,\quad  \forall \mu\in(0,\mu_0),
\end{equation}
which, together with \eqref{ineq301}, implies that
\begin{equation}
(\lambda+2\alpha+\eta)(\lambda+2\beta+\gamma-f'(0))<\alpha\beta(4+5\mu^2),\quad \mu\in(0,\mu_0).
\end{equation}
Solving this inequality yields
\begin{equation}\label{ineq302}
\lambda\le \f{1}{2}(-b+\sqrt{b^2- 4c})=\f{-2c}{b+\sqrt{b^2-4c}}
\end{equation}
with
\[
b=2\alpha+\eta+2\beta+\gamma-f'(0),\quad c=(2\alpha+\eta)(2\beta+\gamma-f'(0))-\alpha\beta(4+5\mu^2).
\]
By direct calculations and the definition of $\beta_0$ in \eqref{def-beta0}, we obtain
\begin{equation}
c=[\gamma-f'(0)](2\alpha+\eta)+2\eta\beta-5\alpha\beta\mu^2=-2\eta(\beta_0-\beta)-5\alpha\beta\mu^2<0,\quad \beta\in(0,\beta_0),
\end{equation}
from which we immediately see that 
\begin{equation}\label{ineq303}
\lambda\le \f{-c}{b}=\f{1}{b}\left( 2\eta(\beta_0-\beta)+5\alpha\beta\mu^2  \right),\quad \forall \mu\in(0,\mu_0),\beta\in (0,\beta_0).
\end{equation}
For aforementioned $\mu_0$, there exists $\beta_1\in(0,\beta_0)$ such that  
\begin{equation}
\sqrt{\f{2\eta(\beta_0-\beta)}{5\alpha\beta}}\in (0,\mu_0), \quad \beta\in (\beta_1,\beta_0).
\end{equation} 
As such, when $\beta\in(\beta_1,\beta_0)$ we choose in particular $\mu=\sqrt{\f{2\eta(\beta_0-\beta)}{5\alpha\beta}}$ in \eqref{ineq303}. Then we obtain
\begin{eqnarray}
c^*(\beta)=\inf_{\mu>0}\f{\lambda(\mu,\beta)}{\mu}\le \left. \f{\lambda(\mu,\beta)}{\mu}\right|_{\mu=\sqrt{\f{2\eta(\beta_0-\beta)}{5\alpha\beta}}}\le  \f{2}{b}\sqrt{10\alpha\beta\eta(\beta_0-\beta)},\quad \beta\in(\beta_1,\beta_0),
\end{eqnarray}
which implies that $\lim_{\beta\uparrow\beta_0}c^*(\beta)=0$.
\end{proof}

{\bf Proof of Theorem \ref{ss1}}:  Introducing the variable change $c=c(\mu,\beta)=\f{\lambda(\mu,\beta)}{\mu}$, we see that \eqref{eigen3} becomes
\begin{equation}\label{eigen4}
F(c\mu,\mu,\beta)=0.
\end{equation}
Since $\inf_{\mu>0}\f{\lambda(\mu,\beta)}{\mu}$ is attained at some $\mu^*>0$, we see that $(c^*,\mu^*)$,  depending on $\beta\in(0,\beta_0)$,  is determined by the following system of transcendental equations
\begin{equation}\label{eq401}
F(c\mu,\mu,\beta)=0, \quad \f{d}{d\mu} F(c\mu,\mu,\beta)=0.
\end{equation}
Next we employ the implicit function theorem to calculate $\f{d}{d\beta} c^*(\beta)$.  Indeed, if the matrix
\begin{equation}
J:=\left(
\begin{array}{cc}
\f{d}{dc} F(c\mu,\mu,\beta) & \f{d}{d\mu}F(c\mu,\mu,\beta)\\
\f{d^2}{dcd\mu} F(c\mu,\mu,\beta)  &  \f{d^2}{d\mu^2}F(c\mu,\mu,\beta) 
\end{array}
\right)|_{(c^*,\mu^*)}
\end{equation}
is invertible, then 
\begin{equation}\label{eq305}
\f{d}{d\beta} c^*(\beta)= -\f{\f{d}{d\beta} F(c\mu,\mu,\beta)}{\f{d}{dc}F(c\mu,\mu,\beta)}|_{(c^*,\mu^*)}
\end{equation}
thanks to $\f{d}{d\mu} F(c\mu,\mu,\beta)|_{(c^*,\mu^*)}=0$. In the following we check that $J$ is invertible. It suffices to verify that none of the diagonal entries  of $J$ is zero, thanks again to $\f{d}{d\mu} F(c\mu,\mu,\beta)|_{(c^*,\mu^*)}=0$. By direct computations, we have
\begin{equation}\label{eq402}
\f{d}{dc} F(c\mu,\mu,\beta)=\mu \partial_1 F |_{(c\mu,\mu,\beta)}, \quad   \f{d}{d\mu}F(c\mu,\mu,\beta)=(c\partial_1 F+\partial_2 F)|_{(c\mu,\mu,\beta)}.
\end{equation}
Since at $(c^*\mu^*, \mu^*,\beta)$ we have $ \f{d}{d\mu}F(c\mu,\mu,\beta)=0$, it then follows that
\begin{equation}\label{eq306}
\f{d}{dc} F(c\mu,\mu,\beta)|_{(c^*,\mu^*)}=-\f{\mu^*}{c^*} \partial_2 F |_{(c^*\mu^*,\mu^*,\beta)}=-\f{2\alpha\beta \mu^*}{c^*(c^*\mu^*+2\alpha+\eta)}(e^{2\mu^*}-e^{-2\mu^*})<0.
\end{equation}
Combining \eqref{eq401} and \eqref{eq402} we derive $c^*=-\f{\partial_2 F}{\partial_1 F}|_{(c^*\mu^*,\mu^*,\beta)}$, by which we further compute to obtain
\begin{equation}
 \f{d^2}{d\mu^2}F|_{(c^*\mu^*,\mu^*,\beta*)}= \f{1}{(\partial_1 F)^2}[(\partial_2 F)^2\partial_1^2 F-2\partial_2 F \partial_{12}^2 F\partial_1 F+\partial_{22}^2 F(\partial_1 F)^2]|_{(c^*\mu^*,\mu^*,\beta)}.
\end{equation}
Note that 
\begin{equation}
\partial_2 F=\f{2\alpha\beta(e^{2\mu}-e^{-2\mu})}{\lambda+2\alpha+\eta}, \quad \partial_{12}^2 F=-\f{2\alpha\beta(e^{2\mu}-e^{-2\mu})}{(\lambda+2\alpha+\eta)^2}, \quad \partial_{22}^2 F=\f{4\alpha\beta(e^{2\mu}+e^{-2\mu})}{\lambda+2\alpha+\eta}
\end{equation}
and
\begin{equation}
\partial_1^2 F=\tau^2f'(0)e^{-\lambda\tau}+\f{2\alpha\beta(e^{\mu}+e^{-\mu})^2}{(\lambda+2\alpha+\eta)^3}\ge \f{2\alpha\beta(e^{\mu}+e^{-\mu})^2}{(\lambda+2\alpha+\eta)^3}.
\end{equation}
It then follows that
\begin{eqnarray}
&&(2\partial_2 F \partial_{12}^2 F)^2-4\partial_{22}^2 F(\partial_2 F)^2\partial_1^2 F\nonumber\\
\le &&(2\partial_2 F \partial_{12}^2 F)^2-4\partial_{22}^2 F(\partial_2 F)^2 \f{2\alpha\beta(e^{\mu}+e^{-\mu})^2}{(\lambda+2\alpha+\eta)^3}\nonumber\\
=&&-\f{64\alpha^2\beta^2(e^{2\mu}-e^{-2\mu})^2}{(\lambda+2\alpha+\eta)^6}(e^{\mu}+e^{-\mu})^4\nonumber\\
<&&0,\quad \text{at $(c^*\mu^*,\mu^*,\beta)$},
\end{eqnarray}
from which we infer that the polynomial $\partial_{22}^2 F x^2-2\partial_2 F \partial_{12}^2 Fx+(\partial_2 F)^2\partial_1^2 F$ is positive for all $x\in\R$. Consequently, $\f{d^2}{d\mu^2}F|_{(c^*\mu^*,\mu^*,\beta*)}>0$. Thus, $J$ is invertible,  and hence, \eqref{eq305} holds. Combining \eqref{eq305} and \eqref{eq306} we see that the sign of $\f{d}{d\beta} c^*(\beta)$ is the same as that of $\f{d}{d\beta} F(c\mu,\mu,\beta)|_{(c^*,\mu^*)}$. Moreover, $\mu=\mu^*(\beta)$ is continuous in $\beta$, due to the implicit function theorem.

Define
\begin{equation}
\mc{B}:=\left\{\beta\in(0,\beta_0): \f{d}{d\beta} F(c\mu,\mu,\beta)|_{(c^*,\mu^*)}=0\right\}.
\end{equation}
By Lemmas \ref{limit1} and \ref{limit2} we see that $\mc{B}\neq\emptyset$. In the following, we show that $\mc{B}$ is a singleton. Indeed,
note that
\begin{equation}\label{eq309}
0=F(c^*\mu^*,\mu^*,\beta)=-(c^*\mu^*+2\beta+\gamma)+\f{\alpha\beta(e^{\mu^*}+e^{-\mu^*})^2}{c^*\mu^*+2\alpha+\eta}+f'(0)e^{-c^*\mu^*\tau},\quad \beta\in(0,\beta_0),
\end{equation}
which, combining with the equality
\begin{equation}\label{eq307}
0=\f{d}{d\beta} F(c\mu,\mu,\beta)|_{(c^*,\mu^*)}=-2+\f{\alpha(e^{\mu^*}+e^{-\mu^*})^2}{c^*\mu^*+2\alpha+\eta},\quad \beta\in \mc{B},
\end{equation}
implies that for any $\beta\in \mc{B}$ the number $\lambda^*(\beta):=c^*(\beta)\mu^*(\beta)$  is the unique solution of 
\begin{equation}\label{eq308}
-(\lambda+\gamma)+f'(0)e^{-\lambda\tau}=0.
\end{equation}
Thus, $\lambda^*=\lambda^*(\beta)$ is independent of $\beta\in\mc{B}$. To show that $\mc{B}$ is a singleton, we first observe from \eqref{eq402} that 
\begin{eqnarray*}
0&&=\f{d}{d\mu} F(c\mu,\mu,\beta)|_{(c^*,\mu^*)}=(c\partial_1 F+\partial_2 F)|_{(c^*\mu^*,\mu^*)}\nonumber\\
&&=\left\{-c^*\left(1+\tau f'(0)e^{-c^*\mu^*\tau}+\f{\alpha\beta(e^{\mu^*}+e^{-\mu^*})^2}{(c^*\mu^*+2\alpha+\eta)^2}\right)+\f{2\alpha\beta}{c^*\mu^*+2\alpha+\eta}(e^{2\mu^*}-e^{-2\mu^*})\right\},
\end{eqnarray*}
which, for $\beta\in\mc{B}$, can be simplified into the following equality thanks to the relations in \eqref{eq309} and \eqref{eq307}:
\begin{equation}
h(\mu^*(\beta))=C(\lambda^*, \beta),\quad \beta\in\mc{B},
\end{equation}
where
\begin{equation}
h(\mu)=\mu(e^{\mu}-e^{-\mu})
\end{equation}
and
\begin{equation}
C(\lambda, \beta)=\f{\lambda}{\sqrt{2\alpha(\lambda+2\alpha+\eta)}}+\f{1}{\beta\sqrt{8\alpha}}\lambda(1+\tau f'(0)e^{-\lambda\tau})\sqrt{\lambda+2\alpha+\eta}.
\end{equation}
Note that $h$ is strictly increasing in $\mu\ge 0$, so is its inverse $h^{-1}$. It then follows that
\begin{equation}\label{eq310}
\mu^*(\beta)=h^{-1}(C(\lambda^*,\beta)), \quad \beta\in\mc{B}.
\end{equation}
On the other hand, by \eqref{eq307} we have 
\begin{equation}\label{eq311}
\mu^*(\beta)=\cosh^{-1}\left(\sqrt{\f{\lambda^*+2\alpha+\eta}{2\alpha}}\right), \quad \beta\in \mc{B}.
\end{equation}
Therefore, combining \eqref{eq310} and \eqref{eq311} yields that
\begin{equation}\label{312}
h^{-1}(C(\lambda^*,\beta))=\cosh^{-1}\left(\sqrt{\f{\lambda^*+2\alpha+\eta}{2\alpha}}\right), \quad \beta\in \mc{B}.
\end{equation}
As such, any $\beta\in \mc{B}$ is the zero of the strictly decreasing function 
\begin{equation}\label{beta-eq}
h^{-1}(C(\lambda,\beta))-\cosh^{-1}\left(\sqrt{\f{\lambda^*+2\alpha+\eta}{2\alpha}}\right),\quad \beta\in(0,\beta_0).
\end{equation}
Hence, $\mc{B}=\{\beta_1\}$ for some $\beta_1\in(0,\beta_0)$, and 
\begin{equation}
c^*(\beta_1)=\f{\lambda^*(\beta_1)}{\mu^*(\beta_1)}=\f{\lambda^*}{\cosh^{-1}\left(\sqrt{\f{\lambda^*+2\alpha+\eta}{2\alpha}}\right)}.
\end{equation}
Moreover, since $c^*(\beta)>0$ for $\beta\in(0,\beta_0)$, by Lemmas \ref{limit1} and \ref{limit2} we obtain that $c^*(\beta_1)=\max_{\beta\in(0,\beta_0)}c^*(\beta)$. The proof is complete. 

{\bf Proof of Theorem \ref{ss2}}: Recall that $\lambda(\mu,\beta)$ is the unique positive zero of $F(\lambda, \mu,\beta)$, which is defined in \eqref{eigen3}. Since here the parameter $\eta$ is of interest, we write $\lambda(\mu,\eta)$ instead of $\lambda(\mu,\beta)$. Note that $F$ is decreasing in $\lambda$ and $\eta$. It follows that $\lambda(\mu,\eta)$ is decreasing in $\eta$, so is $c^*(\eta)$ due to $c^*(\eta)= \f{\lambda(\mu_1,\eta)}{\mu_1}$ for some $\mu_1=\mu_1(\eta)>0$. It then remains to show the limit is zero.

Next we proceed with two cases: (i) $f'(0)\in (\Gamma, 2\beta+\gamma)$; (ii) $f'(0)\ge 2\beta+\gamma$.

(i) Fix $\epsilon\in (0, \eta_0)$. For $\eta\in[\eta_0-\epsilon, \eta_0)$, by the monotonicity of $F$ in $\eta$ and $\lambda$, we have
\begin{equation}
\lambda(\mu,\eta)\le \lambda_0^\epsilon(\mu),
\end{equation}
where $\lambda_0^\epsilon(\mu)$ is the unique positive solution of 
\begin{equation}
-(\lambda+2\beta+\gamma)+\f{\alpha\beta(e^\mu+e^{-\mu})^2}{\lambda+2\alpha+\eta_0-\epsilon}+f'(0)=0.
\end{equation}
Since $f'(0)\in (\Gamma, 2\beta+\gamma)$, by the definition of $\eta_0$ we obtain the relation $f'(0)=\gamma+\f{2\beta\eta_0}{2\alpha+\eta_0}$, by which we further compute to obtain
\begin{equation}
\lambda_0^\epsilon(\mu)=\f{2a_2}{a_1+\sqrt{a_1^2+4a_2}}, 
\end{equation}
where $a_1=2\alpha+\eta_0-\epsilon+\f{4\alpha\beta}{2\alpha+\eta_0}$ and $a_2=\alpha\beta[(e^\mu-e^{-\mu})^2+\f{4\epsilon}{2\alpha+\eta_0}]$. Note that $e^{\mu-\e^{-\mu}}\le 4\mu$ for $\mu\in (0,1)$. It then follows that 
\begin{equation}
\lambda_0^\epsilon(\mu)\le \f{a_2}{a_1}\le \f{\alpha\beta}{a_1} \left(16\mu^2+\f{4\epsilon}{2\alpha+\eta_0}\right),\quad \forall \mu\in(0,1).
\end{equation}
As such, for any $\mu\in(0,1)$ and $\epsilon\in (0,\eta_0)$, we have
\begin{equation}
\lim_{\eta\uparrow \eta_0}c^*(\eta)\le c^*(\eta_0-\epsilon)=\inf_{\mu\in (0,1)}\f{\lambda(\mu, \eta_0-\epsilon)}{\mu} \le \f{\lambda_0^\epsilon(\mu)}
{\mu}\le    \f{\alpha\beta}{a_1} \left(16\mu+\f{4\epsilon}{\mu(2\alpha+\eta_0)}\right).
\end{equation}
Since $\mu$ and $\epsilon$ can be independently arbitrary small, it then follows that $\lim_{\eta\uparrow \eta_0}c^*(\eta)=0$.

(ii) By inequality \eqref{ineq301} we can infer that $\lambda(\mu, \eta)\le \lambda_1(\mu,\eta)$, where $\lambda_1(\mu,\eta)$ is the positive solution of $(\lambda+2\alpha+\eta)(\lambda+2\beta+\gamma-f'(0))=\alpha\beta(e^\mu+e^{-\mu})^2$. Note that $\lambda_1(\mu,\eta)$ decreases in $\eta$ to $f'(0)-2\beta+\gamma$ as $\eta\to\infty$ for any $\mu>0$. It then follows that
\begin{equation}
\lim_{\eta\to\infty}c^*(\eta)=\lim_{\eta\to\infty}\inf_{\mu>0}\f{\lambda(\mu,\eta)}{\mu}\le \lim_{\eta\to\infty}\f{\lambda(\mu,\eta)}{\mu}\le \f{f'(0)-2\beta+\gamma}{\mu},\quad \forall \mu>0,
\end{equation}
in which passing $\mu\to+\infty$ we obtain $\lim_{\eta\to\infty}c^*(\eta)=0$. The proof is complete.

\bibliographystyle{siam}

\end{document}